\newcommand{\IR}{{\mathbb R}}
\newcommand{\IC}{{\mathbb C}}
\newcommand{\IZ}{{\mathbb Z}}
\newcommand{\IN}{{\mathbb N}}
\newcommand{\IQ}{{\mathbb Q}}
\newcommand{\IH}{{\mathbb H}}
\newcommand{\CC}{\mathcal{C}}
\newcommand{\CE}{\mathcal{E}}
\renewcommand{\Im}{{\rm Im}}
\newcommand{\sgn}{\mbox{sgn}}
\newcommand{\SL}{\mathrm{SL}}
\newcommand{\zz}{\mathfrak{z}}
\theoremstyle{plain}
\newtheorem{thm}{Theorem}[section]
\newtheorem{cor}[thm]{Corollary}
\newtheorem{lem}[thm]{Lemma}
\newtheorem{prop}[thm]{Proposition}
\theoremstyle{definition}
\newtheorem*{rem}{Remark}
\numberwithin{equation}{section}
\newcommand{\pmat}[1]{\left( \smallmatrix #1 \endsmallmatrix \right)}
\renewcommand{\sgn}{\textnormal{sgn}}
\def\lp{\left(}
\def\rp{\right)}
\def\a{\alpha}
\def\b{\beta}
\def\d{\delta}
\def\k{\kappa}
\def\e{\varepsilon}
\def\n{\nu}
\def\t{\tau}
\def\del{  \partial}
\def\ddd{{\rm d}}
\newcommand{\im}{{\rm Im}}
\newcommand{\y}{{\rm \mathbbm{y}}}
\newcommand{\xx}{{\rm \mathbbm{x}}}
\renewcommand{\sgn}{{\rm sgn}}
\def\wt{\widetilde}
\def\wh{\widehat}
\def\bar{\overline}
\newcommand{\andd}{\quad \mbox{ and } \quad}
\def\slashchar#1{\setbox0=\hbox{$#1$}           
	\dimen0=\wd0                                 
	\setbox1=\hbox{/} \dimen1=\wd1               
	\ifdim\dimen0>\dimen1                        
	\rlap{\hbox to \dimen0{\hfil/\hfil}}      
	#1                                        
	\else                                        
	\rlap{\hbox to \dimen1{\hfil$#1$\hfil}}   
	/                                         
	\fi}                                        %
\setlist[itemize]{noitemsep, topsep=0pt}
\newcounter{exercise}
\renewcommand{\theexercise}{\thesection.\arabic{exercise}}
\newmdenv[%
style=exercise,
settings={\global\refstepcounter{exercise}},
frametitlefont={\bfseries Exercise~\theexercise\quad},
]{exercise}
\newmdenv[%
style=exercise,
frametitlefont={\bfseries Exercise~\quad},
]{exercise*}
\newmdenv[%
backgroundcolor=gray!8,
linecolor=violet,
outerlinewidth=1pt,
roundcorner=3mm,
skipabove=\baselineskip,
skipbelow=\baselineskip,
]{boxes}
\newcommand{\vast}{\bBigg@{2}}
\newcommand{\Vast}{\bBigg@{5}}
\renewcommand{\pmod}[1]{\  \,  \left(  \mathrm{mod} \,  #1 \right)}
\newcommand{\erf}{{\mathrm{erf}}}
\renewcommand{\b}[1]{\boldsymbol{#1}}
\subjclass[2010] {11F12, 11F20, 11F27, 11F30, 11F37, 11F50, 11P82}
\keywords{Circle Method, completions, false theta functions, mock theta functions, modular forms}
\title{A Framework for Modular Properties of False Theta Functions}
\author{Kathrin Bringmann}
\author{Caner Nazaroglu}
\address{University of Cologne, Department of Mathematics and Computer Science, Weyertal 86-90, 50931 Cologne, Germany}
\email{kbringma@math.uni-koeln.de}
\email{cnazarog@math.uni-koeln.de}
\thanks{The research of the first author is supported by the Alfried Krupp Prize for Young University Teachers of the Krupp foundation and the research leading to these results receives funding from the European Research Council under the European Union's Seventh Framework Programme (FP/2007-2013) / ERC Grant agreement n. 335220 - AQSER. The research of the second author is supported by the European Research Council under the European Union's Seventh Framework Programme (FP/2007-2013) / ERC Grant agreement n. 335220 - AQSER}
\begin{document}

\begin{abstract}
False theta functions closely resemble ordinary theta functions, however they do not have the modular transformation properties that theta functions have. In this paper, we find modular completions for false theta functions, which among other things gives an efficient way to compute their obstruction to modularity. This has potential applications for a variety of contexts where false and  partial theta series appear. To exemplify the utility of this derivation, we discuss the details of its use on two cases. First, we derive a convergent Rademacher-type exact formula for the number of unimodal sequences via the Circle Method and extend earlier work on their asymptotic properties. Secondly, we show how quantum modular properties of the limits of false theta functions can be rederived directly from the modular completion of false theta functions proposed in this paper.
\end{abstract}

\maketitle
\section{Introduction and Statement of results}
In this paper we embed false theta functions into a modular framework. \textit{False theta functions} are functions that are similar to theta functions, but they feature some different sign factors, which prevent them from being modular forms like theta functions.
Although there are some results  on how false theta functions behave under modular transformations \cite{CM}, an overarching derivation and explanation of their modular properties is so far missing. As false theta functions (and closely related  partial theta functions) have a rich history with applications in a variety of mathematical and  physical subfields (see e.g. \cite{Alladi,Andrews81,BCR2014, BM, Cheng:2018vpl, Chung:2018rea, CM, CMW, Fine, Garoufalidis2015, Hikami, LawrenceZagier, ZagierVass}),  understanding their modular properties is a worthy goal to pursue with potential implications for these applications. Our main result in the following discussion is to ``complete" false theta functions by viewing them as boundary values of modular objects that are functions of three complex variables with two variables in the upper half plane and one elliptic variable.

Despite their apparent non-modularity, false theta functions possess some modular transformation properties on the rationals. To describe this, recall that {\it quantum modular forms} \cite{ZaQ} are functions $f:\mathbb{Q} \to \mathbb{C}$ whose obstruction to modularity, $f(\t) - (c\t+d)^{-k} f (\frac{a\t+b}{c \t+d})$ for $\pmat{a & b \\ c & d} \in \mathrm{SL}_2 (\mathbb{Z})$ (or a vector-valued generalization with a suitable multiplier system), is ``nice''. In our setting ``nice'' means that the obstruction extends to a real-analytic function on the reals except for a finite set of points. An interesting and important source of examples is given via quantum invariants of knots and $3$-manifolds \cite{LawrenceZagier}. Rather generally, false theta functions yield quantum modular forms by taking vertical limits in the upper half-plane to rational numbers.
Modular properties, in turn, are explained by the fact that their asymptotic behavior is also related to mock theta functions.

Recall that mock theta functions  were initially introduced by Ramanujan in his last letter to Hardy  (see pages 127-131 of \cite{Ra}) with a  list of 17 examples and the words
\begin{quote}
	``I am extremely sorry for not writing you a single letter up to now \ldots I discovered very interesting functions recently which I call ``Mock'' theta functions. Unlike the ``False'' theta functions (studied partially by Prof. Rogers in his interesting paper) they enter into mathematics as beautifully as the ordinary theta functions. I am sending you with this letter some examples.''
\end{quote} 
Zwegers in his Ph.D. thesis \cite{Zw} viewed the mock theta functions as pieces of real-analytic functions, which transform like modular forms. To be more precise, the lack of modular invariance is repaired by adding non-holomorphic Eichler integrals of the shape ($\tau =\tau_1 + i \tau_2 \in \mathbb{H}$ with $\tau_1,\tau_2 \in \IR$)
\begin{equation}\label{nonE}
\int_{-\overline{\tau}}^{i\infty} \frac{\theta(w)}{\sqrt{-i(w+\tau)}} \ddd w
\end{equation}
to the mock theta functions, where  $\theta$ is a weight $\frac32$ unary theta function called the {\it shadow} of the mock theta function. 
The asymptotic relation between false theta functions and mock theta functions also goes through integrals of the form \eqref{nonE}. Incidentally, it is again this integral (or equivalently its Fourier expansion involving the incomplete gamma function) that we generalize in this paper to understand false theta functions.

Therefore, the challenge we tackle in this paper is to find modular completions for false theta functions to parallel the situation with mock theta functions in contrast to what one may predict from the above quote by Ramanujan. To illustrate our results on a simple yet representative example, consider the false theta function in two variables $\left( z\in\IC,q:=e^{2\pi i\tau},\zeta:= e^{2\pi i z}\right)$
\begin{equation*}
\psi(z;\tau) \coloneqq i \sum_{n\in\IZ} \sgn\left(n+\tfrac12\right)(-1)^n q^{\frac12\left(n+\frac12\right)^2} \zeta^{n+\frac12} .
\end{equation*}
Here we use the usual convention that $\sgn(x):=\pm 1$ for $\pm x > 0$ and $\sgn(0):=0$. Although the function $\psi$ is invariant under $T:=\left(\begin{smallmatrix}
1&1\\0&1
\end{smallmatrix}\right)$ it does not transform invariantly under $S:=\left(\begin{smallmatrix}
0&-1\\1&0
\end{smallmatrix}\right)$. To understand the problem, consider the formally similar Jacobi theta function
 \begin{equation*}
\vartheta(z;\tau) \coloneqq i \sum_{n\in\IZ} (-1)^n q^{\frac12\left(n+\frac12\right)^2} \zeta^{n+\frac12} .
\end{equation*}
This function is invariant under both the $T$ and the $S$ transformation and the invariance under the non-trivial of the two, namely $S: (z,\tau) \mapsto \lp \tfrac{z}{\t}, - \tfrac{1}{\t} \rp$, follows by using Poisson summation and self-duality of Gaussian functions under the Fourier transform. It is this self-duality property that is broken by the extra sign function, hence preventing invariance under $S$. 

Before giving the precise statement of our solution to repair modular invariance, it is worth describing a qualitative picture for it.
First, recall that a similar problem is also present for indefinite theta functions studied by Zwegers \cite{Zw}. The factors of sign functions that ensure convergence of indefinite theta functions are also what prevent them from transforming like  modular forms. The resolution in \cite{Zw} is to replace those sign functions with error functions (taking also $\sqrt{2\t_2}$ factors as input) giving rise to real-analytic modular completions. One way to understand how modular invariance is recovered is to note that the convolution of a sign function and a Gaussian is an error function. Therefore, the modular completion is basically a linear combination of Gaussian functions whose self-duality under the Fourier transform gives an object that behaves invariantly under the modular group. Note that this intuition for how modular invariance is recovered is quite general and suggests a similar remedy for false theta functions. The fact that sign functions are inserted with variables depending on positive definite directions in the lattice however create some problems, since simply changing the sign of $\t$ in the error functions (to compensate for the changing signature) leads to a divergent series. However, note that in the case of indefinite theta functions replacing $-\bar{\tau}$ in $\sqrt{2 \t_2} = \sqrt{-i (\t - \bar{\t})}$ with an independent complex variable $w \in \mathbb{H}$ again leads to a modular invariant object as long as one transforms $\t$ and $w$ simultaneously as $(\t, w) \mapsto (\tfrac{a \t+b}{c \t + d}, \tfrac{a w-b}{-c w + d})$ under $\pmat{a & b \\ c & d} \in \mathrm{SL}_2 (\mathbb{Z})$. The solution for false theta functions is then to use the same type of error function with the sign of $\t$ reversed but that of $w$ kept the same and then to require both variables to transform in the same way as $(\t, w) \mapsto (\tfrac{a \t+b}{c \t + d}, \tfrac{a w+b}{c w + d})$.

More concretely, for $\tau,w\in\IH$ and $z \in \IC$ define (see the discussion in Section \ref{sec:proofThm:Partials} for issues of convergence and holomorphicity) with $z_2 \coloneqq\im(z)$
\begin{equation}\label{definePh}
\wh{\psi}(z;\tau,w) \coloneqq i \sum_{n\in\IZ} \erf\left( -i \sqrt{\pi i(w-\tau)}\left(n+\tfrac12 + \tfrac{z_2}{\tau_2}\right)\right) (-1)^n  q^{\frac12\left(n+\frac12\right)^2}
\zeta^{n+\frac12},
\end{equation}
where $\erf (z) \coloneqq \frac{2}{\sqrt{\pi}} \int\limits_{0}^{z} e^{- t^2} dt$ denotes the \textit{error function} and where we define the square root using the principal branch. Note that 
\begin{equation}\label{limit}
\lim_{t\to\infty} \wh{\psi}(z;\tau, \tau+i t+\e) = \psi(z;\tau)
\end{equation}
if $-\frac12 < \frac{z_2}{\tau_2} < \frac12$ and $\e >0$ arbitrary. The following theorem gives the modular properties of $\wh{\psi}$, for a more precise version see Theorem \ref{Pht}.

\begin{thm}\label{partial1}
	The function $\widehat{\psi}$ transforms like a Jacobi form.
\end{thm}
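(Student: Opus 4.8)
The plan is to establish the two generating transformations $T$ and $S$ for the (elliptic, $\tau$, and $w$) modular action separately, since together with the obvious scaling and invariance properties these generate the full Jacobi group action. Invariance under $T:(z;\tau,w)\mapsto(z;\tau+1,w+1)$ is immediate from the $q$-expansion: the summand in \eqref{definePh} picks up $e^{\pi i(n+\frac12)^2}(-1)^n = e^{\pi i(n^2+n)}(-1)^n$, which equals $1$ since $n^2+n$ is even, and the $\erf$-argument depends on $\tau,w$ only through $w-\tau$ and $z_2/\tau_2$, both unchanged. The elliptic transformations $z\mapsto z+\tau$ and $z\mapsto z+1$ are handled by reindexing $n\mapsto n\mp1$ exactly as for $\vartheta$, after checking that the shift $z_2\mapsto z_2+\tau_2$ inside the $\erf$-argument is absorbed by the same reindexing (the combination $n+\frac12+z_2/\tau_2$ is what makes this work, which is precisely why that term is there).

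The substantive step is the $S$-transformation $(z;\tau,w)\mapsto(\frac{z}{\tau};-\frac1\tau,-\frac1w)$. Here I would first rewrite $\erf$ of its argument as $\sgn$ minus a Gaussian tail, i.e.\ use $\erf(X)=\sgn(\IRe X) - \frac{2}{\sqrt\pi}\,\sgn(\IRe X)\int_{|X|}^\infty e^{-t^2}\,dt$ only heuristically for motivation, and instead work with the integral representation directly: write $\erf(-i\sqrt{\pi i(w-\tau)}\,(n+\frac12+\frac{z_2}{\tau_2}))$ as a Gaussian integral in an auxiliary real variable, so that $\wh\psi$ becomes an integral over a genuine (convergent) theta-like series in $n$ to which Poisson summation applies with the usual Gaussian self-duality. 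This is the analog of the Zwegers mechanism alluded to in the introduction: the $\erf$ completion is, up to the auxiliary integral, a superposition of Gaussians, and the point of flipping the sign of $\tau$ but not of $w$ inside $\sqrt{\pi i(w-\tau)}$ is exactly to keep that superposition convergent while still producing a self-dual object under the simultaneous $S$-action on both half-plane variables. After applying Poisson summation and carefully tracking the resulting Gaussian integrals and the $\sqrt{-i\tau}$-type automorphy factors, one recovers $\wh\psi(\frac{z}{\tau};-\frac1\tau,-\frac1w)$ times the expected weight-$\frac12$ factor $(-i\tau)^{1/2}$ and the Jacobi exponential $e^{\pi i z^2/\tau}$ (or whatever normalization the precise Theorem \ref{Pht} fixes).

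The main obstacle, and where I would spend the most care, is the analytic bookkeeping around convergence and branch choices: one must justify interchanging the sum over $n$ with the auxiliary Gaussian integral (dominated convergence, using that $w-\tau$ has positive imaginary part so the $\erf$-argument has the right real part for decay), verify that the principal-branch square roots $\sqrt{\pi i(w-\tau)}$ and $\sqrt{-i\tau}$ combine consistently through the transformation without spurious sign errors, and confirm that the Poisson-summed series again represents the same holomorphic function of $(\tau,w)\in\IH^2$ and $z\in\IC$ rather than just a formal identity. A clean way to organize this is to prove the identity first for $w$ in a suitable sub-region of $\IH$ (e.g.\ $\IRe(w-\tau)=0$, $\Im w$ large) where everything converges absolutely and the manipulations are transparent, and then extend to all $(\tau,w)\in\IH^2$ by analytic continuation, invoking the holomorphicity discussion referenced for \eqref{definePh}. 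Once $S$ and $T$ (and the elliptic shifts) are in hand, the general Jacobi transformation law follows by composing generators, which is routine.
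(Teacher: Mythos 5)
Your plan matches the paper's proof: the paper likewise handles the elliptic and $T$ transformations by reindexing and inspection, and obtains the $S$-transformation by writing $\erf$ via the integral representation \eqref{rewriteF} as an auxiliary one-dimensional integral of (a linear factor times) Gaussians, proving a self-duality lemma for that kernel under the Fourier transform (Lemma \ref{lem:error_self_dual}), and then applying Poisson summation, with the convergence and branch issues settled by absolute and uniform convergence on compacta. One small correction to your $T$ computation: under $(\tau,w)\mapsto(\tau+1,w+1)$ the summand picks up $e^{\pi i\left(n+\frac12\right)^2}=e^{\pi i\left(n^2+n\right)}e^{\frac{\pi i}{4}}=e^{\frac{\pi i}{4}}$ (you dropped the $\frac14$), so $\wh{\psi}$ is not invariant but acquires the multiplier $e^{\frac{\pi i}{4}}=\nu_\eta(T)^3$, consistent with Theorem \ref{Pht}.
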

Theorem \ref{partial1} is a special case of a more general result. To state this, let $L$ be a lattice of rank $N$ (which we can  identify with $\mathbb{Z}^N$) and $B:L \times L \to \IZ$ a positive-definite bilinear form with $Q(\boldsymbol{n}) \coloneqq \tfrac{1}{2} B(\boldsymbol{n}, \boldsymbol{n})$.
Note that throughout this paper we write vectors in bold letters and their components with subscripts. Let $\boldsymbol{\ell} \in L$ be a {\it characteristic vector}, i.e., $ Q(\boldsymbol{n}) + \frac12 B(\boldsymbol{\ell}, \boldsymbol{n}) \in \mathbb{Z}$ for all $\boldsymbol{n} \in L$.  Moreover $L^*$ denotes the dual lattice of $L$.
We let $\b{c}\in\IR^N$ be an arbitrary vector satisfying $2Q(\b{c})=1$. Define for $\b \mu \in L^*$
\begin{equation*}
\Psi_{Q, \boldsymbol{\mu}}( \boldsymbol{z}; \tau) =\Psi_{Q, \boldsymbol{\mu},\boldsymbol{\ell}, \boldsymbol{c}} ( \boldsymbol{z}; \tau)
 \coloneqq
\sum_{\boldsymbol{n} \in \boldsymbol{\mu} + \frac{\boldsymbol{\ell}}{2} + L} \sgn \lp B(\boldsymbol{c},\,  \boldsymbol{n}) \rp 
q^{Q(\boldsymbol{n})}   e^{2 \pi i B\left(\boldsymbol{n},\,\boldsymbol{z}+ \frac{\boldsymbol{\ell}}{2}\right)},
\end{equation*}
and its completion ($w\in\mathbb H$)
\begin{equation*}\begin{split}
\wh{\Psi}_{Q, \boldsymbol{\mu}}( \boldsymbol{z}; \tau,w)& =\wh{\Psi}_{Q,\boldsymbol{\mu},\boldsymbol{\ell},\boldsymbol{c}}( \boldsymbol{z}; \tau,w)\\
&\coloneqq \sum_{\boldsymbol{n} \in \boldsymbol{\mu} + \frac{\boldsymbol{\ell}}{2} +  L }
\erf \lp - i \sqrt{\pi  i (w- \tau)} 
B\left(\boldsymbol{c}, \boldsymbol{n} + \tfrac{\Im(\boldsymbol{z})}{\tau_2}\right) \rp 
 q^{Q(\boldsymbol{n})} e^{2 \pi i B\left(\boldsymbol{n},\,\boldsymbol{z}+ \frac{\boldsymbol{\ell}}{2}\right)}.
\end{split}\end{equation*}
We prove the following transformation law, which turns $\wh{\Psi}_{Q, \boldsymbol{\mu}}$ into a function transforming like a Jacobi form.
For this, let 
\begin{equation*}
\chi_{\tau,w} \coloneqq \sqrt{\tfrac{i (w - \tau)}{\tau w}}  \tfrac{\sqrt{\tau}\sqrt{w}}{\sqrt{i(w-\tau)}}.
\end{equation*}
\begin{thm}\label{ThPh}
	The function $\wh{\Psi}_{Q, \boldsymbol{\mu}}$ is a holomorphic function of $w$ (and of $\tau$ if $\boldsymbol{z} = 0$) away from the branch cut defined by $\sqrt{i (w - \tau)}$. It satisfies the following Jacobi transformations properties:
	\begin{enumerate}[leftmargin=*, label={\rm (\arabic*)}]
		\item For $\b m, \b r\in L$, we have
		\begin{align*}
		\wh{\Psi}_{Q, \boldsymbol{\mu}}(\boldsymbol{z} + \boldsymbol{m} \tau+\boldsymbol{r}; \tau,w) &= (-1)^{2Q(\boldsymbol{m}+\boldsymbol{r})}  q^{-Q(\boldsymbol{m})} 
		e^{-2\pi i B(\boldsymbol{m},\,\boldsymbol{z})} \wh{\Psi}_{Q, \boldsymbol{\mu}}(\boldsymbol{z}; \tau,w).
		\end{align*}
		\item We have
		\begin{align*}
		\wh{\Psi}_{Q, \boldsymbol{\mu}}( \boldsymbol{z}; \tau+1,w+1)  &= e^{2\pi i Q\left(\boldsymbol{\mu}+\frac{\boldsymbol{\ell}}{2}\right)}
		\wh{\Psi}_{Q, \boldsymbol{\mu}}(\boldsymbol{z};\tau,w),\\
		\wh{\Psi}_{Q, \boldsymbol{\mu}} \lp  \tfrac{\boldsymbol{z}}{\tau} ; -\tfrac{1}{\tau}, -\tfrac{1}{w} \rp  &= 
		\chi_{\tau,w}
		\frac{(-i \tau)^{\frac{N}{2}}}{\sqrt{| L^*/ L |}} 
		e^{2\pi i  \frac{Q(\boldsymbol{z})}{\tau}-\pi i Q(\boldsymbol{\ell})} 
		\sum_{\boldsymbol{\nu} \in L^* / L}  e^{- 2 \pi i B(\boldsymbol{\mu},\,\boldsymbol{\nu})}  
		\wh{\Psi}_{Q, \boldsymbol{\nu}} ( \boldsymbol{z}; \tau, w).
		\end{align*}
	\end{enumerate}
\end{thm}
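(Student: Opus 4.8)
The plan is to establish the three transformation laws of Theorem~\ref{ThPh} separately, with essentially all of the work concentrated in the $S$-transformation. Two facts will be used throughout: reindexing the lattice sum by an element of $L$ preserves each coset $\boldsymbol\mu+\frac{\boldsymbol\ell}{2}+L$, and the defining relation $Q(\boldsymbol n)+\frac12 B(\boldsymbol\ell,\boldsymbol n)\in\IZ$ of the characteristic vector $\boldsymbol\ell$ allows one to collapse the phases produced by the half-integral shift $\frac{\boldsymbol\ell}{2}$. For convergence and holomorphicity I would first record (with details as in Section~\ref{sec:proofThm:Partials}) that, away from the locus $w-\tau\in i\IR_{\ge0}$ where $\sqrt{i(w-\tau)}$ has its branch cut, the error-function factor grows at most subexponentially in $\boldsymbol n$ whereas $q^{Q(\boldsymbol n)}$ decays like a Gaussian, so that the series converges locally uniformly; holomorphicity in $w$ then follows, and likewise in $\tau$ when $\boldsymbol z=0$ since in that case the $\Im(\boldsymbol z)/\tau_2$-term disappears from the $\erf$-argument.

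For the elliptic transformation (1), substitute $\boldsymbol z\mapsto\boldsymbol z+\boldsymbol m\tau+\boldsymbol r$: since $\Im(\boldsymbol z)$ changes by $\tau_2\boldsymbol m$, the $\erf$-argument becomes $-i\sqrt{\pi i(w-\tau)}\,B(\boldsymbol c,\boldsymbol n+\boldsymbol m+\tfrac{\Im(\boldsymbol z)}{\tau_2})$, and the reindex $\boldsymbol n\mapsto\boldsymbol n-\boldsymbol m$ (legitimate as $\boldsymbol m\in L$) restores it to its original form. Pushing the same reindex through $q^{Q(\boldsymbol n)}$ and $e^{2\pi i B(\boldsymbol n,\boldsymbol z+\boldsymbol\ell/2)}$, the cross-terms $q^{\mp B(\boldsymbol n,\boldsymbol m)}$ cancel, $e^{2\pi i B(\boldsymbol n,\boldsymbol r)}$ collapses to $(-1)^{2Q(\boldsymbol r)}$ and $e^{-\pi i B(\boldsymbol m,\boldsymbol\ell)}$ to $(-1)^{2Q(\boldsymbol m)}$ by the characteristic-vector relation, and the surviving $\tau$- and $\boldsymbol z$-dependent factors combine to $q^{-Q(\boldsymbol m)}e^{-2\pi i B(\boldsymbol m,\boldsymbol z)}$; since $(-1)^{2Q(\boldsymbol r)+2Q(\boldsymbol m)}=(-1)^{2Q(\boldsymbol m+\boldsymbol r)}$, this is the claimed identity. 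The first line of (2) is immediate: under $(\tau,w)\mapsto(\tau+1,w+1)$ the quantities $w-\tau$, $\tau_2$ and $\Im(\boldsymbol z)$ are unchanged, so only $q^{Q(\boldsymbol n)}$ is affected and picks up $e^{2\pi i Q(\boldsymbol n)}=e^{2\pi i Q(\boldsymbol\mu+\boldsymbol\ell/2)}$, uniformly in $\boldsymbol n$ on the coset.

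The $S$-transformation is the crux. Following the heuristic in the introduction, the idea is to rewrite $\wh\Psi_{Q,\boldsymbol\mu}$ as a family (parametrized by $w$) of genuine positive-definite Gaussian lattice theta functions and then apply Poisson summation. Concretely, I would use the representation
\begin{equation*}
\erf(\lambda b)=\frac{1}{\sqrt\pi}\int_{\IR}\sgn(u)\,e^{-(u-\lambda b)^2}\,\ddd u\qquad(b\in\IR,\ \lambda\in\IC),
\end{equation*}
with $\lambda:=-i\sqrt{\pi i(w-\tau)}$ and $b:=B(\boldsymbol c,\boldsymbol n+\tfrac{\Im(\boldsymbol z)}{\tau_2})$, interchange the sum over $\boldsymbol n$ with the $u$-integral (licit on the subdomain of $(\tau,w)$ where the resulting double integral converges absolutely, and extended to the full domain afterwards by the identity theorem, using holomorphicity in $w$), and exploit $B(\boldsymbol c,\boldsymbol c)=1$ to see that the $\boldsymbol n$-exponent becomes $\pi i\tau\big(B(\boldsymbol n,\boldsymbol n)-B(\boldsymbol c,\boldsymbol n)^2\big)+\pi i w\,B(\boldsymbol c,\boldsymbol n)^2$ together with terms linear in $\boldsymbol n$ and $u$ --- that is, the original Gaussian with its rank-one $\boldsymbol c$-component transported from the $\tau$- to the $w$-half plane.

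Applying the classical Jacobi transformation (Poisson summation) to this inner lattice theta function at $(\tau,w)\mapsto(-\tfrac1\tau,-\tfrac1w)$, $\boldsymbol z\mapsto\tfrac{\boldsymbol z}{\tau}$ then produces the determinant factor, the Weil-type average $\tfrac1{\sqrt{|L^*/L|}}\sum_{\boldsymbol\nu\in L^*/L}e^{-2\pi i B(\boldsymbol\mu,\boldsymbol\nu)}(\cdots)$, the elliptic factor $e^{2\pi i Q(\boldsymbol z)/\tau}$ from completing the square in $\boldsymbol z$ together with $e^{-\pi i Q(\boldsymbol\ell)}$ from the half-integral shift, and --- after re-assembling the $\sgn(u)$-integral into an error function at the new point, which works precisely because $w$ is transformed with the \emph{same} orientation as $\tau$ (the point stressed in the introduction) --- exactly $\wh\Psi_{Q,\boldsymbol\nu}(\boldsymbol z;\tau,w)$. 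I expect the genuine obstacle here to be entirely a matter of bookkeeping, in two places: first, justifying the integral representation and the interchange on a sufficiently large subdomain; and second, tracking the principal branches of $\sqrt{i(w-\tau)}$, $\sqrt\tau$, $\sqrt w$, $\sqrt{-1/\tau}$ and $\sqrt{-1/w}$ through the Gaussian integration, since these square roots do not compose on the nose and their discrepancy is exactly the factor $\chi_{\tau,w}$ --- so that once the branch conventions are fixed, the total prefactor collapses to $\chi_{\tau,w}\,(-i\tau)^{N/2}/\sqrt{|L^*/L|}$, as claimed.
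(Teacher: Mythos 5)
Your proposal is correct and is, at its core, the same argument the paper gives: convergence from the Gaussian decay of $q^{Q(\boldsymbol n)}$ against the controlled growth of the error-function factor, direct reindexing for part (1) and the $T$-shift, and Poisson summation along the orthogonal splitting of $L\otimes\IR$ determined by $\boldsymbol c$ for the $S$-transformation. The one genuine difference is how the error function is opened up before Poisson summation. The paper uses $\erf(z)=\frac{2z}{\sqrt{\pi}}\int_0^1 e^{-z^2t^2}\,\ddd t$ (equation \eqref{rewriteF}) and isolates all of the analysis in Lemma \ref{lem:error_self_dual}, a self-duality statement for the kernel $F_{\tau,w}(\boldsymbol x)=\sqrt{i(w-\tau)}\,\erf\lp -i\sqrt{\pi i(w-\tau)}B(\boldsymbol c,\boldsymbol x)\rp e^{2\pi i Q(\boldsymbol x)\tau}$ under the Fourier transform attached to $Q$; Poisson summation is then applied once, and the multiplier $\chi_{\tau,w}$ is absorbed by working with $\wt\Psi_{Q,\boldsymbol\mu}=\sqrt{i(w-\tau)}\,\wh\Psi_{Q,\boldsymbol\mu}$, which also settles the branch-tracking concern you raise at the end. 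You instead take the convolution representation $\erf(\lambda b)=\pi^{-1/2}\int_{\IR}\sgn(u)e^{-(u-\lambda b)^2}\,\ddd u$ --- the introduction's heuristic made literal --- and transform the resulting honest Gaussian theta function under the $u$-integral. This works: the interchange is absolutely justified on the nonempty open set $(w_1-\tau_1)^2<4w_2\tau_2$, after which the identity theorem applies as you say, and reassembling the error function at $\lp -\tfrac1\tau,-\tfrac1w\rp$ amounts to the substitution $u\mapsto u\sqrt{w/\tau}$ followed by rotating the contour back to $\IR$ (harmless, since $\sgn$ is constant on each half-line and the transformed integrand decays). The paper's finite $t$-integral over $[0,1]$ sidesteps both of these extra justifications, which is the main thing its packaging buys; your version makes the ``sign convolved with Gaussian'' mechanism more transparent.
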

We expect that completed false theta functions allow a theory parallel to that of mock theta functions. We will study this in future research and in this paper we give two applications, an exact formula for the number of unimodal sequences and quantum modular forms. We start with unimodal sequences.
A finite sequence of positive integers $\{a_j\}_{j=1}^s$ is called a {\it unimodal sequence of size $n$} if there exists $k\in \IN$ such that $a_1\leq a_2 \leq \ldots \leq a_k \geq a_{k+1} \geq \ldots \geq a_s$ and $a_1+\ldots+a_s=n$. Let $u(n)$ denote the number of unimodal sequences of size $n$. Then (see e.g. \cite{Au51})
\begin{equation*}
U(q) := \sum_{n\geq 0} u(n) q^n =\frac{1}{(q;q)_\infty^{\, 2}} \sum_{n\geq 1} (-1)^{n+1} q^{\frac{n(n+1)}{2}},
\end{equation*}
where $(q;q)_\infty:=\prod_{j=1}^\infty(1-q^j)$. Note that we may write
\begin{equation}\label{splitU}
U(q) = 
\frac{i}{2} q^{-\frac{1}{24}} \frac{\psi(\tau)}{\eta(\tau)^2}
+ \frac{q^{\frac{1}{12}}}{\eta (\tau)^2},
\end{equation}
where $\eta(\tau):=q^{\frac{1}{24}}\prod_{n\geq 1}(1-q^n)$ is {\it Dedekind's eta function} and $\psi(\tau):=\psi(0,\tau)$. The first few Fourier coefficients of each term are given by 
\begin{align*}
- \frac{i}{2}  \frac{\psi(\tau)}{\eta(\tau)^2} &= q^{\frac{1}{24}}
\lp 1 + q + 3 q^2 + 6 q^3 + 12 q^4 + 21 q^5 + 38 q^6 + 63 q^7 + 106 q^8
+170 q^9  + \ldots \rp,\\
\frac{1}{\eta(\tau)^2} &= q^{-\frac{1}{12}}
\lp 1 + 2q + 5 q^2 + 10 q^3 + 20 q^4 + 36 q^5 + 65 q^6 + 110 q^7 + 185 q^8
+300 q^9 + \ldots \rp.
\end{align*}
The leading asymptotics of $u(n)$ was determined by Auluck \cite{Au51} as
\begin{equation}\label{AW}
u(n) = \frac{1}{8\cdot 3^{\frac34} n^{\frac54}} e^{2\pi \sqrt{\frac{n}{3}}} \left(1+O\left(n^{-\frac12}\right)\right).
\end{equation}
This result was then generalized by Wright \cite{Wright}, who gave the asymptotic expansion to all orders of $n$ for the leading exponential term. In this paper, we prove an exact formula for $u(n)$, that is a convergent series expression which thereby includes all the subleading exponential contributions for large $n$. Our expression is analogous to the exact formula for the integer partition function $p(n)$ found by Rademacher \cite{Rademacher}, which is solely determined by the modular transformation properties of its generating function and the principal parts of the corresponding modular form at the cusps.

To state the exact formula for $u(n)$, we define for $n,r\in\IZ$ and $k\in\IN$ the {\it Kloosterman sums} 
\begin{equation*}\begin{split}
K_k (n) &\coloneqq 
i \sum_{\substack{0 \leq h < k \\ \mathrm{gcd} (h,k) = 1}}  
\nu_\eta (M_{h,k})^2 
\zeta_{12k}^{-(12n-1)h-h'},
\\
K_k(n,r) &\coloneqq  e^{\frac{3 \pi i}{4}} (-1)^r
\sum_{\substack{0 \leq h < k \\ \mathrm{gcd} (h,k) = 1}}  
\nu_\eta (M_{h,k})^{-1}  
\zeta_{24k}^{-(24n+1)h+\left(12 r^2 + 12 r + 1\right)h'},
\end{split}
\end{equation*}
where $h'$ is a solution of $hh' \equiv -1\pmod{k}$,
$
M_{h, k}:=\left(\begin{smallmatrix} h'&-\frac{hh'+1}{k}\\k&-h\end{smallmatrix}\right), \ \zeta_\ell :=e^{\frac{2\pi i}{\ell}}
$
for $\ell\in\IN$ and $\nu_\eta$ is the multiplier for $\eta$. In particular, for $M = \pmat{a & b \\ c & d}$ with $c>0$ it is given by (see Theorem 3.4 in \cite{Apostol})
\begin{equation*}
	\nu_\eta (M) := \exp \lp \pi i \lp \tfrac{a+d}{12c} - \tfrac{1}{4} + s(-d,c) \rp \rp,
	\quad \mbox{where }
	s(h,k) \coloneqq \sum_{r=1}^{k-1} \tfrac{r}{k} \lp  \tfrac{hr}{k} - \left\lfloor \tfrac{hr}{k} \right\rfloor - \tfrac12 \rp.
\end{equation*}
Finally $I_\kappa$ denotes the $I$-Bessel function of order $\kappa$. We then have the following expression for $u(n)$.
\begin{thm}\label{asTh}
	We have
	\begin{align*}
	u(n)=\frac{2 \pi}{12n-1} \sum_{k\geq 1}
	\frac{K_k(n) }{k} 
	I_2 \lp \tfrac{\pi}{3k} \sqrt{12n-1 }  \rp-
	\frac{\pi}{2^{\frac34} \sqrt{3}(24n+1)^{\frac34}} 
	\sum_{k\geq 1}  \  \  \sum_{r \pmod{2k}} \frac{K_k(n,r)}{k^2}\\
	\times
	\int_{-1}^{1}   \left(1 - x^2\right)^{\frac{3}{4}}
	\cot \lp \tfrac{\pi}{2k} \lp \tfrac{x}{\sqrt{6}}-r-\tfrac{1}{2} \rp \rp
	I_{\frac{3}{2}} \lp \tfrac{\pi}{3\sqrt{2}k} \sqrt{\left(1-x^2\right) \lp 24n + 1 \rp}  \rp 
	\ddd x .
	\end{align*}
\end{thm}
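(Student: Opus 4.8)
The plan is to apply the Circle Method (in the Rademacher form) to the generating function $U(q)$, using the decomposition \eqref{splitU} into two pieces: the ``Eichler-type'' piece $\tfrac{i}{2} q^{-1/24} \psi(\tau)/\eta(\tau)^2$ built from the false theta function, and the genuinely modular piece $q^{1/12}/\eta(\tau)^2$. For the second piece, since $1/\eta^2$ is (essentially) a weakly holomorphic modular form of weight $-1$, the standard Rademacher machinery applies verbatim: one writes $u_2(n) = \tfrac{1}{2\pi i}\int_{\CC} \tfrac{q^{1/12}}{\eta(\tau)^2} q^{-n}\tfrac{dq}{q}$ over a circle, splits into Farey arcs, and uses the modular transformation of $\eta$ together with the standard saddle-point/Bessel evaluation of the arc integrals. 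This produces the first sum in the theorem, with Kloosterman sums $K_k(n)$ packaging the $\eta$-multipliers $\nu_\eta(M_{h,k})^2$ and an $I_2$-Bessel function from the weight $-1$ index shift. The main work is the first piece, where $\psi$ is \emph{not} modular, so one must use the completion $\wh\psi$ from Theorem \ref{partial1} (equivalently Theorem \ref{ThPh}).

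For the false theta piece, the key idea is to replace $\psi(\tau)$ on each Farey arc near $h/k$ by $\wh\psi(0;\tau,w)$ with an appropriate auxiliary parameter $w$, using \eqref{limit}: as the arc shrinks, $\wh\psi$ with $w = \tau + it$ approaches $\psi$, and the error introduced is controlled because $\wh\psi - \psi$ (or rather the difference between the integral representation and the finite sum) decays. Concretely, I would use the Fourier-type / incomplete-gamma representation: $\erf$ of the argument in \eqref{definePh} differs from $\sgn$ by a complementary error function, so $\wh\psi - \psi$ is an integral over $w$ of a rapidly convergent theta-like series; one can then apply the modular transformation law of Theorem \ref{ThPh} (the $S$-transformation, item (2), with $L = \IZ$, $Q(n) = \tfrac12(n+\tfrac12)^2$) to understand the behavior of $\wh\psi$ near each cusp $h/k$. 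Transforming $\tau \mapsto M_{h,k}\tau$ sends the arc near $h/k$ to a neighborhood of $i\infty$, where $\wh\psi$ and $\eta^{-2}$ both have controlled expansions; the $\eta^{-2}$ contributes its multiplier, and the transformed $\wh\psi$ contributes a sum over residue classes $\boldsymbol\nu \in L^*/L$ (here $r \pmod{2k}$) weighted by the $\chi_{\tau,w}$ factor and the Gaussian $e^{2\pi i Q(z)/\tau}$-type terms. Carrying out the $w$-integral and the remaining arc integral then yields the double sum, the $\cot$ factor (coming from summing a geometric-type series in the residue classes, i.e. from $\eta^{-2}$ paired against the theta structure), and the $I_{3/2}$-Bessel function together with the $\int_{-1}^1 (1-x^2)^{3/4}\,dx$ integral, which is the signature of a half-integral weight index shift. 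The Kloosterman sums $K_k(n,r)$ then collect $\nu_\eta(M_{h,k})^{-1}$ and the relevant roots of unity.

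In more detail, the order of steps would be: (i) set up the Cauchy integral for $u(n)$ with a circle of radius $1 - \tfrac{1}{N^2}$ and a Farey dissection of order $N$; (ii) on the arc around $h/k$ substitute $\tau = \tfrac{h}{k} + \tfrac{iz}{k}$ and apply the $\eta$-transformation to both pieces; (iii) for the modular piece, bound the non-principal-part contributions and evaluate the principal part by the classical Bessel integral $\int e^{\cdots} dz = $ Bessel, letting $N \to \infty$ to get absolute convergence — this gives the $K_k(n)$ term; (iv) for the false theta piece, insert $\wh\psi$ via \eqref{limit}, apply Theorem \ref{ThPh}(2) to get the expansion near the cusp as a sum over $r$, keeping track of $\chi_{\tau,w}$, and show the ``tail'' of the $\boldsymbol n$-sum in $\wh\psi$ contributes negligibly or assembles into the stated integral; (v) perform the $w$-integration (the auxiliary variable), which after a change of variables $w \leftrightarrow x$ produces the $(1-x^2)^{3/4}$ weight and the $\cot$ from the elliptic/theta data paired with $\eta^{-2}$; (vi) evaluate the remaining $z$-integral as an $I_{3/2}$-Bessel function and check convergence of the resulting double series in $k$ and $r$.

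The main obstacle will be step (iv)–(v): controlling the interplay between the auxiliary variable $w$ in the completion and the Circle Method limit, and extracting a \emph{convergent} closed form rather than an asymptotic expansion. Unlike the classical Rademacher setup where each arc integral is a clean Bessel integral, here the false theta contribution near a cusp is genuinely two-variable (the $(\tau,w)$ pair), so one must justify interchanging the $w$-integral with the sum over arcs and with the $N \to \infty$ limit, and show that the error terms — both from the Farey dissection and from the difference $\wh\psi$ versus its holomorphic limit — vanish. The $\cot$ factor, which has poles, also requires care: one must check that the $x$-integral $\int_{-1}^1 (1-x^2)^{3/4} \cot(\cdots) I_{3/2}(\cdots)\,dx$ is a genuine (principal-value or honestly convergent) integral, i.e. that the potential singularities of the cotangent are cancelled by the vanishing of $(1-x^2)^{3/4}$ or lie outside $[-1,1]$, and that the resulting $k$-sum converges. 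Estimating the Kloosterman sums $K_k(n,r)$ well enough (trivially $O(k)$, or better) to get convergence of $\sum_k k^{-2} \sum_r |K_k(n,r)| \cdot (\text{Bessel bound})$ is the final technical point, analogous to Rademacher's treatment but complicated by the extra $r$-sum of length $2k$.
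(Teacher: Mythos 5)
Your overall architecture --- split $U$ via \eqref{splitU}, treat $1/\eta^2$ by classical Rademacher, and use the completion $\wh\psi$ to obtain a transformation law for the false theta piece --- matches the paper. But there is a genuine gap at exactly the step you flag as the main obstacle, and the mechanism you propose for producing the $x$-integral and the cotangent is not the one that works. In the paper the auxiliary variable $w$ is eliminated \emph{before} the Circle Method begins: taking $w \to \tau + i\infty + \e$ in the transformation law of $\wh\psi$ yields Lemma \ref{4.1}, a one-variable identity expressing $f = -\tfrac{i}{2}\psi/\eta^2$ in terms of its image under $M_{h,k}$ plus $g$ times an Eichler integral $\mathcal{E}_{a/c}$ of $\eta^3$. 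There is no two-variable object inside the Circle Method and no ``$w$-integration'' at the end. The $x$ in the final formula is not a transmuted $w$; it is the integration variable of a Mordell-type representation of $\mathcal{E}_\varrho(\varrho+iV)$ (Lemma \ref{4.2}), obtained termwise from the identity $e^{-\pi s^2V}\bigl(\sgn(s)+\erf(is\sqrt{\pi V})\bigr)=-\tfrac{i}{\pi}\lim_{\e\to0^+}\int_\IR \frac{e^{-\pi Vx^2}}{x-s(1+i\e)}\,\ddd x$.

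The missing idea is how to isolate a ``principal part'' of this obstruction term. On the arc near $h/k$ the Eichler integral is multiplied by $g(\tfrac{h'}{k}+\tfrac{i}{Z})\sim \zeta_{12k}^{-h'}e^{\pi/(6Z)}$, which grows, so one must split the Mordell integral at $|x|=\sqrt{2d}$ with $d=\tfrac{1}{12}$ matching the pole order of $1/\eta^2$: the inner range $|x|\le \sqrt{1/6}$ (where $e^{2\pi dV-\pi Vx^2}$ still grows) is the principal part $\mathcal{E}^*$, and the outer range is shown to be $O(\log k)$ uniformly in $h',V$ (Lemma \ref{lemInd}), which is what makes the Farey-dissection errors vanish as $N\to\infty$ and what produces, after rescaling, the $\int_{-1}^1$. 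Moreover, the sum over $r\pmod{2k}$ does not come from the $L^*/L$ sum in Theorem \ref{ThPh} (for $L=\IZ$, $Q(n)=n^2/2$ that group is trivial), and the $\cot$ does not come from $\eta^{-2}$: both arise from splitting the $n$-sum in the Mordell representation as $n=2km+r$, using the periodicity of $e^{\pi i(n+1/2)^2 h'/k}$ modulo $2k$, and summing over $m$ with $\pi\cot(\pi x)=\lim_{N\to\infty}\sum_{|m|\le N}(x+m)^{-1}$; the poles of the resulting cotangent lie at integers shifted by $\tfrac12$, hence outside the rescaled interval, so the $x$-integral is honestly convergent. Without the Mordell rewriting and the $d=\tfrac{1}{12}$ splitting, your steps (iv)--(vi) have no concrete definition of the principal part and no way to bound the discarded terms, so the argument cannot be completed as described.
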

As a corollary we obtain \eqref{AW}.
\begin{cor}\label{asCor}
The asymptotics \eqref{AW} hold.
\end{cor}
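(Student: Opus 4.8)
The plan is to read off \eqref{AW} directly from the exact formula in Theorem~\ref{asTh} by isolating the $k=1$ contribution of each of the two sums and bounding everything else. I would begin by locating the dominant index. In the first sum the term of index $k$ is, up to polynomial factors in $n$, of size $\exp(\frac{\pi}{3k}\sqrt{12n-1})$; in the second sum, since the Bessel argument $\frac{\pi}{3\sqrt{2}k}\sqrt{(1-x^2)(24n+1)}$ is maximal over $x\in[-1,1]$ at $x=0$, the term of index $k$ is $\ll\exp(\frac{\pi}{3\sqrt{2}k}\sqrt{24n+1})$ up to polynomial factors, using $|K_k(n)|,|K_k(n,r)|\leq k$ and the boundedness of $\cot(\frac{\pi}{2k}(\frac{x}{\sqrt{6}}-r-\frac12))$ on $[-1,1]$ (which holds because $r+\frac12$ is a half-integer while $\frac{x}{\sqrt{6}}\in[-\frac1{\sqrt{6}},\frac1{\sqrt{6}}]$, so the argument stays away from $\pi\IZ$). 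Since $\frac{\pi}{6}\sqrt{12n-1}$ and $\frac{\pi}{6\sqrt{2}}\sqrt{24n+1}$ are both $\sim\pi\sqrt{n/3}$ whereas the eventual main term grows like $e^{2\pi\sqrt{n/3}}$, the contributions with $k\geq 2$ are exponentially negligible.

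Next I would evaluate the $k=1$ term of the first sum. Here $M_{0,1}=S$ and $\nu_\eta(S)=e^{-\pi i/4}$, so $K_1(n)=i\,\nu_\eta(S)^2=i\cdot(-i)=1$; combining $\frac{\pi}{3}\sqrt{12n-1}=2\pi\sqrt{n/3}+O(n^{-1/2})$ with the classical expansion $I_\kappa(x)=\frac{e^x}{\sqrt{2\pi x}}(1+O(x^{-1}))$ gives $\frac{2\pi}{12n-1}K_1(n)I_2(\frac{\pi}{3}\sqrt{12n-1})=\frac{1}{4\cdot 3^{3/4}n^{5/4}}e^{2\pi\sqrt{n/3}}(1+O(n^{-1/2}))$. (This is precisely the leading term of the Rademacher expansion of the coefficients of $q^{1/12}\eta(\tau)^{-2}$, that is, of the number of two-colored partitions; one could also simply quote that asymptotics.)

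For the $k=1$ term of the second sum the index $r$ runs over $\{0,1\}$, and a short multiplier computation gives $K_1(n,r)=e^{3\pi i/4}(-1)^r\nu_\eta(S)^{-1}=e^{\pi i}(-1)^r=(-1)^{r+1}$. I would then substitute $I_{3/2}(y)=\frac{e^y}{\sqrt{2\pi y}}(1+O(y^{-1}))$, uniformly for $x$ bounded away from $\pm1$ with the endpoint neighborhoods contributing negligibly because of the weight $(1-x^2)^{3/4}$, and apply Laplace's method to the resulting integral, whose phase $\sqrt{1-x^2}$ has a nondegenerate interior maximum at $x=0$. This produces, for each $r$, a leading term proportional to $\cot(-\frac{\pi}{2}(r+\frac12))\,\frac{e^{\frac{\pi}{3\sqrt{2}}\sqrt{24n+1}}}{\sqrt{24n+1}}$; since $\cot(-\frac{\pi}{2}(r+\frac12))$ equals $-1$ for $r=0$ and $+1$ for $r=1$, the alternating signs of $K_1(n,r)$ reinforce rather than cancel, and $\sum_{r}K_1(n,r)\cot(-\frac{\pi}{2}(r+\frac12))=(-1)(-1)+(1)(1)=2$. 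Collecting the prefactor $\frac{\pi}{2^{3/4}\sqrt{3}(24n+1)^{3/4}}$, using $\frac{\pi}{3\sqrt{2}}\sqrt{24n+1}=2\pi\sqrt{n/3}+O(n^{-1/2})$ and $2^{3/4}\cdot 24^{3/4}=48^{3/4}=8\cdot 3^{3/4}$, the $k=1$ part of the second sum equals $-\frac{1}{8\cdot 3^{3/4}n^{5/4}}e^{2\pi\sqrt{n/3}}(1+O(n^{-1/2}))$.

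Finally, adding the two pieces and using $\frac14-\frac18=\frac18$ — so that the false theta contribution cancels exactly half of the modular contribution at leading order — together with the fact that both relative errors are $O(n^{-1/2})$, I obtain $u(n)=\frac{1}{8\cdot 3^{3/4}n^{5/4}}e^{2\pi\sqrt{n/3}}(1+O(n^{-1/2}))$, which is \eqref{AW}. The main obstacle is the Laplace analysis of the integral in the second sum: one must make the $I_{3/2}$-expansion uniform up to the turning points $x=\pm1$ where the Bessel argument degenerates (and bound the contribution of their neighborhoods), handle the bounded but nontrivial $\cot$ factor, and above all track the constants ($\nu_\eta(S)$, the powers of $2$, $3$, and $24$, and the several $\sqrt{2\pi}$'s) carefully enough that the modular and false theta pieces combine to $\frac{1}{8\cdot 3^{3/4}}$ rather than $\frac{1}{4\cdot 3^{3/4}}$.
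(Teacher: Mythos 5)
Your proposal is correct and follows essentially the same route as the paper: isolate the $k=1$ terms, use $K_1(n)=1$ and $K_1(n,r)=(-1)^{r+1}$ together with the asymptotic $I_\kappa(x)=\frac{e^x}{\sqrt{2\pi x}}(1+O(x^{-1}))$, apply the saddle-point (Laplace) method to the $x$-integral at its interior maximum $x=0$, and combine $\frac{1}{4\cdot 3^{3/4}}-\frac{1}{8\cdot 3^{3/4}}=\frac{1}{8\cdot 3^{3/4}}$. The constants and signs you track (including $\nu_\eta(S)=e^{-\pi i/4}$ and the value $\cot(-\frac{\pi}{2}(r+\frac12))$ at the saddle) all check out against the paper's computation.
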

\begin{rem}
Following the proof of Corollary \ref{asCor} one can determine further terms in the asymptotic expansion of $u(n)$.
\end{rem}
We next turn to explaining how quantum modularity follows from the construction of completed false theta functions. For simplicity we consider a special family studied by Milas and the first author \cite{BM}. Define, for $j \in \IZ$ and $N\in\IN_{>1}$,
\begin{align}\label{Fjp}
F_{j,N}(\tau)\coloneqq \sum_{\substack{n\in\IZ\\n\equiv j\pmod{2N}}} \sgn(n)  q^{\frac{n^2}{4N}}.
\end{align}
In fact, we can further restrict to $1 \leq j \leq N-1$, because $F_{j, N} = - F_{-j,N}$ and $F_{j+2N,N}= F_{j,N}$.
The following theorem describes the (false) modular behavior of the $F_{j,N}$. For this define the weight $\frac32$ unary theta functions
\begin{equation*}
f_{j,N}(\tau) \coloneqq  \frac{1}{2N}  \sum_{\substack{n\in\IZ\\n\equiv j\pmod{2N}}}nq^{\frac{n^2}{4N}} .
\end{equation*}
\begin{thm}\label{QuantumTheorem}
		For $M = \pmat{a & b \\ c & d} \in \mathrm{SL}_2 (\IZ)$, we have
		\begin{equation*}
		F_{j,N} \lp \t  \rp  
		-  \sgn (c \t_1 + d) 
		(c \tau +d)^{-\frac{1}{2}}  \sum_{r=1}^{N-1} \psi_{j,r}\left(M^{-1}\right)   F_{r,N} \lp \tfrac{a\t+b}{c \t +d } \rp
		=   -i
		\sqrt{2N} \int_{-\frac{d}{c}}^{i\infty} \frac{f_{j,N}(\mathfrak z)}{\sqrt{-i (\zz- \t)  }}\ddd\zz,
		\end{equation*}
		where the integration path avoids the branch cut defined by $\sqrt{-i (\zz- \t)  }$ and
		the multiplier $\psi_{j,r}$ is defined in \eqref{definemul}. 
	\end{thm}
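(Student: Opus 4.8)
The plan is to derive the transformation law for $F_{j,N}$ from the Jacobi transformation law of the completion $\wh{\Psi}_{Q,\boldsymbol{\mu}}$ established in Theorem \ref{ThPh}, applied to the rank-one lattice $L = \IZ$ with $Q(n) = \tfrac{n^2}{4N}$ (so $B(m,n) = \tfrac{mn}{2N}$), taking $\boldsymbol{c}$ with $2Q(\boldsymbol{c}) = 1$, $\boldsymbol{\mu} = \tfrac{j}{2N} \in L^*$, and $\boldsymbol{z} = 0$. First I would identify $F_{j,N}(\tau)$, up to an elementary factor, with the vertical limit $\lim_{t\to\infty}\wh{\Psi}_{Q,\boldsymbol{\mu}}(0;\tau,\tau+it)$ analogous to \eqref{limit}, using that $\erf(-i\sqrt{\pi i(w-\tau)}\,B(\boldsymbol{c},n)) \to \sgn(n)$ as $w \to \tau$ vertically; the Fourier-expansion form of $\wh{\Psi}$ (equivalently, writing $\erf$ via the incomplete gamma function) shows this limit equals $F_{j,N}(\tau)$ plus a non-holomorphic Eichler-type integral. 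Concretely, I would establish the identity
\begin{equation*}
\wh{\Psi}_{Q,\boldsymbol{\mu}}(0;\tau,w) = F_{j,N}(\tau) - \frac{i}{\sqrt{2N}}\int_{w}^{i\infty} \frac{f_{j,N}(\zz)}{\sqrt{-i(\zz-\tau)}}\,\ddd\zz
\end{equation*}
(with the appropriate normalization), by differentiating both sides in $w$ and matching, since $\partial_w \erf(-i\sqrt{\pi i(w-\tau)}\,B(\boldsymbol{c},n))$ produces precisely a Gaussian of weight $\tfrac32$ in $\zz = w$ summed against $n$, which reassembles into $f_{j,N}$.

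Next I would specialize the modular transformations in Theorem \ref{ThPh}(2) to $\boldsymbol{z}=0$: the $T$-transformation gives the multiplier $e^{2\pi i Q(\boldsymbol{\mu}+\boldsymbol{\ell}/2)}$, and the $S$-transformation relates $\wh{\Psi}_{Q,\boldsymbol{\mu}}(0;-\tfrac1\tau,-\tfrac1w)$ to a finite linear combination $\sum_{\boldsymbol{\nu}\in L^*/L} e^{-2\pi i B(\boldsymbol{\mu},\boldsymbol{\nu})}\wh{\Psi}_{Q,\boldsymbol{\nu}}(0;\tau,w)$ with the weight factor $(-i\tau)^{1/2}/\sqrt{|L^*/L|}$ and the correction $\chi_{\tau,w}$. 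Composing these $S$ and $T$ generators to build an arbitrary $M = \pmat{a&b\\c&d} \in \mathrm{SL}_2(\IZ)$ yields a cocycle relation for $\wh{\Psi}_{Q,\boldsymbol{\mu}}(0;\tau,w)$ with some multiplier matrix; I would \emph{define} $\psi_{j,r}(M)$ (matching \eqref{definemul}) to be exactly this multiplier, so that the transformation law for $\wh{\Psi}$ holds by construction. Then I would take the vertical limit $w = \tau + it$, $t\to\infty$ on both sides. On the left this returns $F_{j,N}(\tau)$ minus the Eichler integral from $\tau$ to $i\infty$; the point is that when $M$ acts, the completion is evaluated at $-\tfrac1w = M w$ with $w \to \tfrac{a\tau+b}{c\tau+d}$ along a vertical-type path, and the difference of the two Eichler integrals telescopes to $\int_{M^{-1}(i\infty)}^{i\infty} = \int_{-d/c}^{i\infty}$, producing exactly the right-hand side $-i\sqrt{2N}\int_{-d/c}^{i\infty} f_{j,N}(\zz)/\sqrt{-i(\zz-\tau)}\,\ddd\zz$. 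The factor $\sgn(c\tau_1+d)$ and the restriction to $1 \le r \le N-1$ arise from the branch choices in $\chi_{\tau,w}$ and $(c\tau+d)^{-1/2}$ together with the symmetry $F_{r,N} = -F_{-r,N}$, $F_{r+2N,N} = F_{r,N}$, which collapses the sum over $L^*/L \cong \IZ/2N\IZ$ to the stated sum over $r = 1, \dots, N-1$.

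The main obstacle I anticipate is controlling the branch cuts and the vertical limit uniformly. Theorem \ref{ThPh} is stated only away from the branch cut of $\sqrt{i(w-\tau)}$, and when we apply $M$ the completion gets evaluated at arguments $(\tfrac{a\tau+b}{c\tau+d}, \tfrac{aw+b}{cw+d})$ whose difference $\tfrac{i(w-\tau)}{(c\tau+d)(cw+d)}$ can cross the cut depending on $\sgn(c\tau_1+d)$; reconciling the principal-branch conventions on both sides is precisely what forces the factor $\sgn(c\tau_1 + d)$ and must be tracked carefully through $\chi_{\tau,w}$. A secondary technical point is justifying that the vertical limit commutes with the (finite) sum over residues and with the integral — here one uses the exponential decay of $f_{j,N}(\zz)$ as $\Im(\zz)\to\infty$ and dominated convergence, together with the holomorphicity of $\wh{\Psi}$ in $w$ guaranteed by Theorem \ref{ThPh}, so that the limiting integral is taken along a path avoiding the branch cut as stated. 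Once these branch bookkeeping issues are settled, the rest is a routine composition of the two generators and the limit computation.
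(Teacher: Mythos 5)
Your proposal is correct and follows essentially the same route as the paper: specialize the completion $\wh{\Psi}_{Q,\boldsymbol\mu}$ to the rank-one lattice to get $\wh F_{j,N}$, express it via the Eichler-type integral of $f_{j,N}$ (the paper's Proposition \ref{fhat}), apply the transformation law from Theorem \ref{ThPh}, and take the vertical limit $w\to\tau+i\infty$, with the factor $\sgn(c\tau_1+d)$ emerging from $\lim\chi_{\tau,w}(M)=\sgn(c)$ combined with the branch choice on the vertical path from $\tfrac{a}{c}$. The only points where the paper does work you elide are the identification of the composed multiplier with the classical theta multiplier \eqref{definemul} (which the paper gets by matching against the known vector-valued modularity of $f_{j,N}$ rather than by defining it) and the explicit choice of $\varepsilon_1$ and the $\pm$ sign that makes the limiting integration path vertical — both of which you correctly flag as the technical content to be supplied.
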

As a corollary we obtain quantum modular properties of $F_{j,N}$.
\begin{cor}\label{quantum}
	The functions $F_{j,N}$ are vector-valued quantum modular forms with quantum set $\IQ$.
\end{cor}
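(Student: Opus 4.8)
The plan is to deduce the corollary directly from Theorem \ref{QuantumTheorem}, essentially by unwinding the definition of a vector-valued quantum modular form. First I would observe that the functions $F_{j,N}$ given by \eqref{Fjp} are defined on all of $\IH$ via convergent $q$-series, and that each $F_{j,N}$ has a well-defined radial limit at every rational point: indeed, as $\t \to \tfrac{h}{k} \in \IQ$ vertically, the series $\sum_{n \equiv j} \sgn(n) q^{n^2/(4N)}$ is (up to a partial theta piece) a finite twisted sum of Gauss-type series, so the limit exists and produces a value $F_{j,N}(\tfrac{h}{k}) \in \IC$. This realizes each $F_{j,N}$ as a function $\IQ \to \IC$, so that the quantum set may be taken to be all of $\IQ$.

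Next I would package the $F_{j,N}$ ($1 \le j \le N-1$) into a vector $\b F_N = (F_{1,N}, \dots, F_{N-1,N})^{T}$ and rewrite Theorem \ref{QuantumTheorem} as the statement that, for each $M = \pmat{a & b \\ c & d} \in \SL_2(\IZ)$,
\begin{equation*}
\b F_N(\t) - \sgn(c\t_1 + d)\, (c\t + d)^{-\frac12}\, \rho(M^{-1})\, \b F_N\!\left( \tfrac{a\t+b}{c\t+d} \right) = \b r_M(\t),
\end{equation*}
where $\rho(M^{-1})$ is the matrix with entries $\psi_{j,r}(M^{-1})$ and $\b r_M(\t)$ is the vector of Eichler-type integrals $-i\sqrt{2N}\int_{-d/c}^{i\infty} f_{j,N}(\zz)\big(\!-i(\zz-\t)\big)^{-\frac12}\ddd\zz$. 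The key point is then that, since $f_{j,N}$ is a holomorphic cusp-type object (a weight $\tfrac32$ unary theta function with exponential decay toward $i\infty$), the integral $\int_{-d/c}^{i\infty} f_{j,N}(\zz)\big(\!-i(\zz-\t)\big)^{-\frac12}\ddd\zz$ converges for $\t$ in a neighbourhood of $\IR$ and, once the contour is deformed off the branch cut, extends to a real-analytic function of $\t$ on $\IR \setminus \{-d/c\}$. Hence $\b r_M$ is, for each fixed $M$, real-analytic on $\IR$ away from the single rational point $-d/c$, which is exactly the ``nice'' error-of-modularity condition in the definition of quantum modular form; the multiplier system is $M \mapsto \sgn(c\t_1+d)\rho(M^{-1})$ and one checks it is consistent on $T$ and $S$ using parts (1)--(2)-type identities already recorded (or directly from the finiteness of the sum over $j$).

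I would then assemble these pieces: $F_{j,N}$ is defined on $\IQ$, transforms under all of $\SL_2(\IZ)$ with the stated multiplier up to an error term $\b r_M$ that is real-analytic on $\IR$ except at finitely many (in fact one) rational point, and the weight is $\tfrac12$. This is precisely the definition of a vector-valued quantum modular form of weight $\tfrac12$ with quantum set $\IQ$, so the corollary follows. The main obstacle I anticipate is the analytic input about $\b r_M$: one must justify (i) convergence of the Eichler integral up to the real line, (ii) that deforming the contour around the branch cut of $\sqrt{-i(\zz-\t)}$ gives a single-valued real-analytic continuation past $\IR$, and (iii) that the only genuine singularity is at the endpoint $\t = -d/c$. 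All three follow from the rapid decay of $f_{j,N}$ near $i\infty$ together with a standard contour argument, but this is where the real work lies; the algebraic bookkeeping of the multiplier $\psi_{j,r}$ and the reduction $F_{j,N} = -F_{-j,N}$, $F_{j+2N,N}=F_{j,N}$ is then routine.
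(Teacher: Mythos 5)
Your proposal is correct and follows essentially the same route as the paper: the corollary is deduced directly from Theorem \ref{QuantumTheorem} by taking vertical limits of $\tau$ to rationals and observing that the Eichler-type integral on the right-hand side extends to a real-analytic function of $\tau$ on $\IR \setminus \{-\tfrac{d}{c}\}$ thanks to the rapid decay of $f_{j,N}$ and a contour deformation around the branch cut. The additional bookkeeping you include (radial limits of $F_{j,N}$ at rationals, vector-valued packaging, consistency of the multiplier) is consistent with what the paper leaves implicit.
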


The paper is organized as follows. In Section \ref{sec:proofThm:Partials} we prove Theorem \ref{ThPh}, which then in particular implies Theorem \ref{partial1}. Section \ref{sec:asThAndasCor} is devoted to the proof of Theorem \ref{asTh} and in Section \ref{sec:Quantum} we show Theorem \ref{QuantumTheorem}.

\section*{Acknowledgments}
The authors thank Chris Jennings-Shaffer for helpful comments on an earlier version of the paper.

\section{Proof of Theorem \ref{partial1} and Theorem \ref{ThPh}}\label{sec:proofThm:Partials}
\subsection{An auxiliary lemma}
To determine the modular transformation of $\wh{\Psi}_{Q,\b \mu}$ we use Poisson summation, for which we in particular require the explicit evaluation of a certain Fourier transform. For this, we define
\begin{align*}
F_{\tau,w}(\b  x)&\coloneqq \sqrt{i (w-\tau)} 
\erf \lp - i \sqrt{\pi  i (w- \tau)}  B(\boldsymbol{c},\boldsymbol{x}) \rp  e^{ 2\pi  i Q(\boldsymbol{x})\tau}. 
\end{align*}
Note that we may write
\begin{align}\label{rewriteF}
F_{\tau,w}(\b x)&=
2 (w-\tau) B(\boldsymbol{c},\,\boldsymbol{x}) e^{ 2\pi  i Q(\boldsymbol{x})\tau}  
\int_0^1 e^{\pi  i (w - \tau) B(\boldsymbol{c},\,\boldsymbol{x})^2 t^2 } \ddd t.
\end{align}
For a function $f:\IR^N\to\IC$, we define the \textit{Fourier transform of $f$ with respect to $Q$} as
\begin{equation*}
\mathcal F(f)(\boldsymbol{x}) \coloneqq 
| \det (A) |^{\frac{1}{2}} \int_{\IR^N} f(\boldsymbol{y}) e^{- 2 \pi i B(\boldsymbol{x},\,\boldsymbol{y})} \boldsymbol{\ddd y},
\end{equation*}
where $Q(\b n)=:\frac12\b n^TA\b n$ and $\boldsymbol{\ddd y}:= dy_1 \cdot \ \dots \ \cdot dy_N$. The following lemma shows that $F_{\tau, w}$ is basically self-dual under the Fourier transform with respect to $Q$.

\begin{lem}\label{lem:error_self_dual}
	We have
	\begin{equation*}
	\mathcal F(F_{\tau,w}) (\b x)= (-i)^{- \frac{N}{2}}  \tau^{-\frac{N-1}{2}} w^{\frac{1}{2}} 
	F _{-\frac{1}{\tau}, -\frac1w}(\b x).
	\end{equation*}
\end{lem}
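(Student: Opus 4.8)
The plan is to compute the Fourier transform directly using the integral representation \eqref{rewriteF}, which reduces the problem to a Gaussian integral in $\b y$. First I would write
\[
\mathcal F(F_{\tau,w})(\b x) = |\det(A)|^{\frac12} \int_0^1 \int_{\IR^N} 2(w-\tau) B(\b c,\b y)\, e^{2\pi i Q(\b y)\tau + \pi i (w-\tau) B(\b c,\b y)^2 t^2 - 2\pi i B(\b x,\b y)}\, \b{\ddd y}\, \ddd t,
\]
interchanging the (absolutely convergent, after a suitable $\tau_2>0$ bound) integrations. The exponent is a quadratic form in $\b y$ whose quadratic part is $2\pi i\, \b y^T\!\bigl(A\tau + (w-\tau) t^2\, \b c\,\b c^T\bigr)\b y\,/\,2$; I would complete the square and evaluate the Gaussian. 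The determinant of $A\tau + (w-\tau)t^2 \b c\b c^T$ is computed by the matrix determinant lemma: it equals $\det(A)\tau^N \bigl(1 + (w-\tau)t^2 \cdot 2Q(\b c)/\tau\bigr) = \det(A)\tau^{N-1}(\tau + (w-\tau)t^2)$, using $2Q(\b c)=1$. This already produces the $\tau^{-(N-1)/2}$ and the $(-i)^{-N/2}$ prefactors after taking the square root (with care about branches, using $\tau_2>0$).

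The linear term $2(w-\tau)B(\b c,\b y)$ in front requires differentiating the Gaussian with respect to an auxiliary parameter, or equivalently observing that $\int B(\b c,\b y) e^{(\cdots)}\b{\ddd y}$ is obtained by applying $\frac{1}{-2\pi i}\,\b c^T \nabla_{\b x}$ to the scalar Gaussian integral, then pulling it through. After the Gaussian evaluation, the inner integrand becomes (up to the prefactors) a function of the form $\frac{w-\tau}{\tau+(w-\tau)t^2}\,\bigl[\text{linear in }B(\b c,\b x)\bigr]\, e^{-2\pi i Q(\b x)/\tau + \pi i \frac{(w-\tau)t^2}{\tau(\tau+(w-\tau)t^2)} B(\b c,\b x)^2 \cdot(\text{const})}$; the key algebraic simplification is that the Gaussian-completion shifts $Q(\b y)\tau \mapsto -Q(\b x)/\tau$ plus a correction proportional to $B(\b c,\b x)^2$, and the rational function of $t$ in the new exponent and prefactor must be matched, after a substitution $t \mapsto s$, to the integral representation of $F_{-1/\tau,-1/w}$. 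Concretely, $F_{-1/\tau,-1/w}(\b x)$ involves $\int_0^1 e^{\pi i(-1/w+1/\tau)B(\b c,\b x)^2 s^2}\ddd s$ times $2(-1/w+1/\tau)B(\b c,\b x)$ and $\sqrt{i(-1/w+1/\tau)}\,e^{-2\pi i Q(\b x)/\tau}$; I would find the explicit substitution $s = s(t)$ (a Möbius-type change of variable in $t^2$) that identifies the two integrals, checking that $s$ runs over $[0,1]$ as $t$ does and that the Jacobian accounts for the remaining $t$-dependent prefactor.

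I expect the main obstacle to be the bookkeeping of square-root branches: the factor $\sqrt{i(w-\tau)}$ built into $F_{\tau,w}$, the $\sqrt{i(-1/w+1/\tau)}$ in $F_{-1/\tau,-1/w}$, the $\sqrt{\det(A)\tau^{N-1}(\tau+(w-\tau)t^2)}$ coming out of the Gaussian, and the $(-i)^{-N/2}$ must all be reconciled consistently, and the combination is precisely what makes the $\chi_{\tau,w}$ factor in Theorem \ref{ThPh} appear rather than a naive $(-i\tau)^{N/2}$. I would handle this by working throughout with $\tau, w$ in a restricted region (say $\tau=w$ on the imaginary axis first, or $|w-\tau|$ small with $\tau$ near $i\infty$) where all principal-branch choices are unambiguous, verifying the identity there, and then invoking analytic continuation in $w$ (and in $\tau$) off the branch cut of $\sqrt{i(w-\tau)}$ to conclude in general. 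A secondary, more routine obstacle is justifying the interchange of the $t$-integral with the $\b y$-integral and the convergence of the Gaussian, which follows from the estimate $\IRe(2\pi i Q(\b y)\tau) = -2\pi \tau_2 Q(\b y) < 0$ together with the nonpositivity of the real part of the $t^2$-term for $w,\tau \in \IH$.
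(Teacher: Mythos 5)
Your proposal is correct and follows essentially the same route as the paper: it likewise starts from \eqref{rewriteF}, interchanges the integrals, evaluates the Gaussian (the paper uses an orthogonal change of basis $A=CC^T$ adapted to $\b{c}$ rather than your determinant-lemma-plus-differentiation device, but to the same effect and with the same determinant $\det(A)\tau^{N-1}(\tau+(w-\tau)t^2)$), and concludes with exactly your M\"obius-type substitution $\frac{1}{\tau}-\frac{1}{\tau+(w-\tau)t^2}=\left(\frac{1}{\tau}-\frac{1}{w}\right)u^2$. One small correction to your final convergence remark: the real part of the $t^2$-term is \emph{positive} when $\im(w)<\im(\tau)$, so absolute convergence in that regime should instead be deduced from Cauchy--Schwarz, $B(\b{c},\b{y})^2\le 2Q(\b{c})\cdot 2Q(\b{y})=2Q(\b{y})$, which shows the Gaussian damping $e^{-2\pi\tau_2 Q(\b{y})}$ still dominates for all $t\in[0,1]$ since $\im(w)>0$.
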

\begin{proof}
	Using \eqref{rewriteF}, we obtain
	\begin{equation*}
	\mathcal F({F}_{\tau, w}) (\boldsymbol{x}) = 2 (w-\tau)
	| \det (A) |^{\frac{1}{2}} \int_{\IR^N}
	\int_0^1 B(\boldsymbol{c},\boldsymbol{y})  e^{\pi i (w-\tau) B(\boldsymbol{c},\;\boldsymbol{y})^2 t^2+2\pi i Q(\boldsymbol{y})\tau - 2 \pi i B(\boldsymbol{x},\; \boldsymbol{y})}\ddd t \boldsymbol{\ddd y}.
	\end{equation*} 
	Note that changing the order of integration is allowed since the integrals are absolutely convergent.
	
	We next switch to an orthogonal basis for $\boldsymbol{y}$ with one of the basis vectors taken to be $\boldsymbol{c}$. In particular, we write $A = C C^T$ for some $C \in \IR^{N \times N}$ with $\boldsymbol{c}^T C = \pmat{0 & \ldots & 0 & 1}$. Letting $C^T \boldsymbol{y} =: \pmat{\mathbbm{y} \\ y_0}$ and $C^T \boldsymbol{x} =: \pmat{\mathbbm{x} \\ x_0}$, with $\mathbbm{x}, \mathbbm{y} \in \IR^{N-1}$, $x_0,y_0 \in \IR$,  we obtain
	\begin{equation*}
	\mathcal F(F_{\tau,w})(\b x)= 2 (w-\tau)
	\int_{\IR^{N-1}} \int_\IR 
	\int_0^1 y_0  e^{\pi i (w-\tau) y_0^2 t^2 + \pi i y_0^2\tau + \pi i \y^T \y \tau- 2\pi i \xx^T \y - 2\pi i x_0 y_0 } 
	\ddd t\ddd y_0\boldsymbol{\ddd \y}.
	\end{equation*}
	Explicitly computing the integrals on $\y$ and $y_0$, we get 
	\begin{equation*}
	\mathcal F(F_{\tau,w})(\b x) = 2(-i)^{-\frac{1}{2}} (w-\tau)   (- i \tau)^{-\frac{N-1}{2}} 
	e^{- \frac{2\pi  i}{\tau} Q(\boldsymbol{x})}   x_0  \int_0^1 
	\frac{e^{\pi i x_0^2 \lp \frac{1}{\tau} - \frac{1}{\tau+  (w-\tau)t^2}\rp}}{\lp \tau +  (w-\tau) t^2 \rp^{\frac{3}{2}}}
	\ddd t.
	\end{equation*}
	Changing variables from $t$ to $u$ where
	\begin{equation*}
	\frac{1}{\tau} - \frac{1}{\tau+(w-\tau)t^2} = \lp \tfrac{1}{\tau} - \tfrac{1}{w} \rp u^2
	\end{equation*}
	gives the claim.
\end{proof}

\subsection{Proof of Theorem \ref{ThPh}}
To prove Theorem \ref{ThPh} we use a slightly modified function that avoids the multiplier $\chi_{\tau,w}$, namely 
\begin{equation*}
\wt{\Psi}_{Q,\boldsymbol{\mu}}( \boldsymbol{z}; \tau, w) 
\coloneqq  \sqrt{i(w-\tau)} 
\wh{\Psi}_{Q,\boldsymbol{\mu}}( \boldsymbol{z};\tau, w).
\end{equation*}
Theorem \ref{ThPh} then follows immediately from the following theorem.

\begin{thm}\label{thm:modularity_thm}
	The function $\wt{\Psi}_{Q,\b \mu}$ is a holomorphic function of $w$ (and of $\tau$ if $\boldsymbol{z} = 0$) satisfying the following Jacobi transformation properties:
	\begin{enumerate}[leftmargin=*, label={\rm (\arabic*)}]
		\item We have, for $\b m,\b r\in L$,
		\begin{align*}
		\wt{\Psi}_{Q,\boldsymbol{\mu}}(\boldsymbol{z} + \boldsymbol{m}\tau+\boldsymbol{r}; \tau, w) &= (-1)^{2Q(\boldsymbol{m}+\boldsymbol{r})}  q^{-Q(\boldsymbol{m})} 
		e^{-2\pi i B(\boldsymbol{m},\,\boldsymbol{z})} \wt{\Psi}_{Q,\boldsymbol{\mu}}(\boldsymbol{z}; \tau,w).
		\end{align*}
		\item We have
		\begin{align*}
		\wt{\Psi}_{Q,\boldsymbol{\mu}}(\boldsymbol{z}; \tau+1,w+1)  &= e^{2 \pi i Q\left(\boldsymbol{\mu}+ \frac{\boldsymbol{\ell}}{2}\right)}
		\wt{\Psi}_{Q,\boldsymbol{\mu}}(\boldsymbol{z};\tau,w),\\
		\wt{\Psi}_{Q,\boldsymbol{\mu}}\lp \tfrac{\boldsymbol{z}}{\tau}; -\tfrac{1}{\tau}, -\tfrac{1}{w} \rp  &= 
		\frac{(-i)^{\frac{N}{2}}   }{\sqrt{| L^*/ L |}} 
		\tau^{\frac{N-1}{2}} w^{-\frac{1}{2}} 
		e^{2\pi i \frac{Q(\boldsymbol{z})}{\tau}- \pi i Q(\boldsymbol{\ell})} 
		\sum_{\boldsymbol{\nu} \in L^* / L}  e^{- 2 \pi i B(\boldsymbol{\mu},\,\boldsymbol{\nu})}  
		\wt{\Psi}_{Q,\boldsymbol{\nu}} ( \boldsymbol{z}; \tau, w).
		\end{align*}
	\end{enumerate}
\end{thm}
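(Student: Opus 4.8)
The plan is to treat the three assertions — holomorphicity, the elliptic/Jacobi transformation~(1), and the modular transformation~(2) — separately, with the bulk of the work concentrated in the $S$-transformation of~(2).

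First I would settle \emph{convergence and holomorphicity}. Using the integral representation
\[
\erf(-i\sqrt{\pi i(w-\tau)}\,B(\b c,\b n+\tfrac{\Im(\b z)}{\tau_2}))
= \frac{-2i\sqrt{\pi i(w-\tau)}}{\sqrt{\pi}}\int_0^1 B(\b c,\b n+\tfrac{\Im(\b z)}{\tau_2})\,e^{\pi i(w-\tau)B(\b c,\b n+\frac{\Im(\b z)}{\tau_2})^2 t^2}\,\ddd t,
\]
one writes each summand of $\wt\Psi_{Q,\b\mu}$ as $q^{Q(\b n)}e^{2\pi iB(\b n,\b z+\b\ell/2)}$ times something of at most polynomial growth in $\b n$ (the key point being that $\Im(i(w-\tau)B(\b c,\cdot)^2 t^2)\le 0$ when $\tau,w\in\IH$, so the exponential in $t$ is bounded), and then $q^{Q(\b n)}$ with $Q$ positive definite forces absolute and locally uniform convergence, giving holomorphicity in $w$ (and in $\tau$ when $\b z=0$, since then the only $\tau$-dependence sitting inside a non-holomorphic place, namely $\Im(\b z)/\tau_2$, disappears). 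The branch cut is exactly where $\sqrt{i(w-\tau)}$ is discontinuous.

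Next, the \emph{elliptic transformation}~(1) and the $\tau\mapsto\tau+1$ case of~(2) are formal manipulations of the series: shifting $\b n\mapsto\b n-\b m$ (equivalently reindexing $\b\mu+\b\ell/2+L$), using that $\b\ell$ is characteristic so that $Q(\b n)+\tfrac12 B(\b\ell,\b n)\in\IZ$, and collecting the resulting powers of $q$ and roots of unity; the argument of $\erf$ is insensitive to $\b z\mapsto\b z+\b m\tau+\b r$ because $B(\b c,\b n+\Im(\b z)/\tau_2)$ only sees $\b n+\Re$-part shifts and an $\Im(\b z)$-shift that is absorbed by the reindexing. These are the routine steps.

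The heart of the proof is the $S$-transformation. Here the plan is to apply Poisson summation to $\wt\Psi_{Q,\b\mu}(\b z;\tau,w)$ viewed as a sum over the coset $\b\mu+\b\ell/2+L$. Writing the summand as $F_{\tau,w}(\b n+\tfrac{\Im(\b z)}{\tau_2})\cdot(\text{exponential linear in }\b n)$ up to the overall $\sqrt{i(w-\tau)}$ and completing the square in $\b z$, Poisson summation over $L$ converts the sum into a sum over $L^*$ of Fourier transforms $\mathcal F(F_{\tau,w})$ evaluated at dual-lattice points, with the coset shift producing the character $e^{-2\pi i B(\b\mu,\b\nu)}$ and the sum over $L^*/L$; the factor $|L^*/L|^{-1/2}$ and the volume factor $|\det A|^{1/2}$ in the definition of $\mathcal F$ are bookkept against one another. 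Then Lemma~\ref{lem:error_self_dual} is invoked to replace $\mathcal F(F_{\tau,w})$ by $(-i)^{-N/2}\tau^{-(N-1)/2}w^{1/2}F_{-1/\tau,-1/w}$, which after re-expressing $F_{-1/\tau,-1/w}$ back in terms of $\erf$ and matching arguments (this is where one checks that $B(\b c,\b n+\Im(\b z/\tau)/(−1/\tau)_2)$ lines up correctly — note $\Im(-1/\tau)$ computations — and that the Gaussian prefactor $e^{2\pi i Q(\b z)/\tau}$ emerges) yields exactly $\wt\Psi_{Q,\b\nu}(\b z;\tau,w)$ on the right. The factor $e^{-\pi i Q(\b\ell)}$ comes from the half-shift $\b\ell/2$ passing through Poisson summation, using again that $\b\ell$ is characteristic. \textbf{The main obstacle} I anticipate is precisely this last matching: tracking all the phase factors, the $\Im(\b z)/\tau_2$ term under $\b z\mapsto\b z/\tau$, and the branches of the various square roots so that the powers of $\tau$ and $w$ combine to $\tau^{(N-1)/2}w^{-1/2}$ with the stated sign — essentially an exercise in careful bookkeeping rather than a conceptual difficulty, but one where errors are easy to make. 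A secondary subtlety is justifying the interchange of Poisson summation with the $t$-integral in \eqref{rewriteF}, which is handled by the absolute convergence already established.
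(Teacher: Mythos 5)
Your strategy coincides with the paper's: (1) and the $T$-shift by reindexing the coset, and the $S$-transformation by Poisson summation over $\b \mu+\tfrac{\b\ell}{2}+L$ combined with the self-duality of $F_{\tau,w}$ from Lemma \ref{lem:error_self_dual}. (The paper runs the lemma in the reverse direction, first rewriting $\wt{\Psi}_{Q,\b\mu}(\tfrac{\b z}{\tau};-\tfrac{1}{\tau},-\tfrac1w)$ in terms of $F_{-1/\tau,-1/w}$ and using $\mathcal F(\mathcal F(f))(\b x)=f(-\b x)$ before Poisson summation, but this is the same computation as yours.)

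The one step that would fail as written is in your convergence argument. You claim the error-function factor contributes ``at most polynomial growth in $\b n$'' because the exponential in the $t$-integral is bounded for $\tau,w\in\IH$; but $\bigl|e^{\pi i(w-\tau)B(\b c,\b x)^2t^2}\bigr|=e^{-\pi(w_2-\tau_2)B(\b c,\b x)^2t^2}$, which is bounded by $1$ only when $w_2\ge\tau_2$. For $w_2<\tau_2$ the factor multiplying $q^{Q(\b n)}e^{2\pi iB(\b n,\b z+\b\ell/2)}$ grows like $e^{\pi(\tau_2-w_2)B(\b c,\,\b n+\Im(\b z)/\tau_2)^2}$, which is genuinely exponential in $\b n$ along directions near $\b c$, so positive definiteness of $Q$ alone does not finish the argument. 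Convergence still holds, but one must invoke the normalization $2Q(\b c)=1$: by Cauchy--Schwarz, $B(\b c,\b x)^2\le 2Q(\b c)\cdot 2Q(\b x)=2Q(\b x)$, so the exponential growth is absorbed by the factor $e^{-2\pi Q(\b n+\Im(\b z)/\tau_2)\tau_2}$ coming from $|q^{Q(\b n)}e^{2\pi iB(\b n,\b z)}|$, leaving decay of order $e^{-2\pi Q(\b n+\Im(\b z)/\tau_2)w_2}$ with $w_2>0$. This is precisely the case distinction $w_2>\tau_2$ versus $w_2\le\tau_2$ that the paper's proof makes when bounding $\bigl|\sqrt{i(w-\tau)}\,\erf(\cdot)\bigr|$. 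The rest of your outline is the careful bookkeeping you anticipate and matches the paper.
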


\begin{proof}
	We first prove that the series defining $\wt{\Psi}_{Q,\boldsymbol{\mu}}$ converges to a holomorphic function of $w$. Each summand is holomorphic in $w$ (and in $\tau$ if $\boldsymbol{z}=0$) by \eqref{rewriteF}, so we prove holomorphicity of the resulting series by showing that it is absolutely and uniformly convergent on compact subsets of $\IC^N \times \IH \times \IH$. This follows by bounding (with $w_2 \coloneqq \mathrm{Im}(w)$)
	\begin{equation*}
	\left| q^{Q(\boldsymbol{n})}  e^{2 \pi i B\left(\boldsymbol{n},\,\boldsymbol{z}+\frac{\boldsymbol{\ell}}{2}\right)} \right|
	= 
	e^{-2\pi Q(\boldsymbol{n})\tau_2- 2\pi B(\boldsymbol{n},\,\Im (\boldsymbol{z}))} = 
	e^{-2\pi Q\left(\boldsymbol{n}+\frac{\Im(\boldsymbol{z})}{\tau_2}\right)\tau_2+ \frac{2\pi}{\tau_2} Q(\Im(\boldsymbol{z}))},
	\end{equation*}
	and, using \eqref{rewriteF}, we obtain 
	\begin{multline*}
	\left|   \sqrt{i(w-\tau)}  \erf \lp -i\sqrt{\pi i(w-\tau)}B\left(\boldsymbol{c}, \boldsymbol{n}+ \tfrac{\Im(\boldsymbol{z})}{\tau_2}\right) \rp  \right|   
	\\  \qquad
	\leq 2 |w -\tau| \left| B\left(\boldsymbol{c}, \boldsymbol{n}+ \tfrac{\Im(\boldsymbol{z})}{\tau_2}\right) \right|
	\begin{cases}
	1 \quad &\mbox{if } w_2 > \tau_2, \\
	e^{-\pi(w_2 - \tau_2) B\left(\boldsymbol{c},\, \boldsymbol{n}+ \frac{\Im(\boldsymbol{z})}{\tau_2}\right)^2}
	\quad &\mbox{if } w_2 \leq \tau_2.
	\end{cases}
	\end{multline*}

	\noindent (1) The claim follows directly by changing variables $\b n \mapsto \b n-\b m$.
	
	\noindent (2) The first modular transformation follows immediately.
	
	To prove the second modular transformation, we write
	\begin{equation*}
	\wt{\Psi}_{Q,\boldsymbol{\mu}} (\boldsymbol{z}; \tau,w) = e^{-2\pi i \frac{Q(\Im(\boldsymbol{z}))}{\tau_2^2}\tau} 
	\sum_{\boldsymbol{n} \in \boldsymbol{\mu} + \frac{\boldsymbol{\ell}}{2}+L}
	F_{\tau, w} \lp \boldsymbol{n} +\tfrac{\Im(\boldsymbol{z})}{\tau_2}   \rp 
	e^{2 \pi i B\left(\boldsymbol{n},-\frac{\Im(\boldsymbol{z}\overline\tau)}{\tau_2} + \frac{\boldsymbol{\ell}}{2} \rp }.
	\end{equation*}
	We then compute
	\begin{multline*}
	\wt{\Psi}_{Q,\boldsymbol{\mu}} \lp \tfrac{\boldsymbol{z}}{\tau}; -\tfrac{1}{\tau}, -\tfrac{1}{w} \rp \\= e^{ \frac{2\pi i}{\tau}\frac{Q(\Im(\boldsymbol{z} \bar{\tau}))}{ \tau_2^2} + 2\pi i B\left(\boldsymbol{\mu}+\frac{\boldsymbol{\ell}}{2},\frac{\Im(\boldsymbol{z})}{\tau_2}+\frac{\boldsymbol{\ell}}{2}\right)}
	\sum_{\boldsymbol{r} \in L}
	F_{-\frac1\tau,-\frac1w} \lp \boldsymbol{r} + \boldsymbol{\mu} + \tfrac{\boldsymbol{\ell}}{2} + \tfrac{\Im (\boldsymbol{z} \bar{\tau})}{\tau_2}\rp 
	e^{2\pi i B\left(\boldsymbol{r},\,\frac{\Im(\boldsymbol{z})}{\tau_2} + \frac{\boldsymbol{\ell}}{2}\right)}.
	\end{multline*}
	We conclude from Lemma \ref{lem:error_self_dual} and the general fact that $\mathcal F(\mathcal F(f)) (\b x) = f(-\boldsymbol{x})$ that we have
	\begin{equation}\label{eq:lemma_S_transf_result}
	F_{\tau,w}(-\b x)= (-i)^{- \frac{N}{2}}  \tau^{-\frac{N-1}{2}} w^{\frac{1}{2}} \mathcal F\left(F_{-\frac1\tau,-\frac1w}\right)(\b x).
	\end{equation}
	Moreover we use that if $g(\boldsymbol{x}) = h(\boldsymbol{x}+\boldsymbol{a})   e^{2 \pi i B(\boldsymbol{b},\boldsymbol{x})}$, then we have $\mathcal F(g)(\b y) = \mathcal F(h)(\b y-\b b)e^{2 \pi i B(\boldsymbol{a},\boldsymbol{y}-\boldsymbol{b})}$. 
	Using Poisson summation with equation \eqref{eq:lemma_S_transf_result} gives 
	\begin{multline*}
	\wt{\Psi}_{Q,\boldsymbol{\mu}}\lp \tfrac{\boldsymbol{z}}{\tau}; -\tfrac{1}{\tau}, -\tfrac{1}{w} \rp 
	=
	\frac{(-i)^{\frac{N}{2}}   \tau^{\frac{N-1}{2}}   w^{-\frac{1}{2}}}{\sqrt{| L^* / L |}} e^{ \frac{2\pi i}{\tau }\frac{Q(\Im(\boldsymbol{z} \bar{\tau}))}{\tau_2^2} + 2\pi i B\left(\boldsymbol{\mu}+\frac{\boldsymbol{\ell}}{2}, \frac{\Im(\boldsymbol{z})}{\tau_2}+\frac{\boldsymbol{\ell}}{2}\right)} 
	\\\times \sum_{\b r \in L^*}
	F_{\tau, w} \lp - \boldsymbol{r} + \tfrac{\Im(\boldsymbol{z})}{\tau_2}+ \tfrac{\boldsymbol{\ell}}{2}  \rp
	e^{2\pi i B\lp  \boldsymbol{r} - \frac{\Im(\boldsymbol{z})}{\tau_2} -  \frac{\boldsymbol{\ell}}{2} , \; \boldsymbol{\mu} + \frac{\boldsymbol{\ell}}{2}  + \frac{\Im (\boldsymbol{z} \bar{\tau})}{\tau_2} \rp}.
	\end{multline*}
	Changing $\boldsymbol{r} \mapsto -\boldsymbol{r}$ and  writing the sum over $L^*$ as $\sum_{\boldsymbol{r} \in L^*} f(\boldsymbol{r}) = \sum_{\boldsymbol{\nu} \in L^*/ L} \sum_{\boldsymbol{r} \in L} f(\boldsymbol{r}+\boldsymbol{\nu})$ we get that the sum on $\b r$ equals
	\begin{align*}
	\sum_{\boldsymbol{\nu} \in L^*/ L} \sum_{\boldsymbol{r} \in L} 
	F_{\tau, w} \lp \boldsymbol{r} + \boldsymbol{\nu} + \tfrac{\Im(\boldsymbol{z})}{\tau_2}+ \tfrac{\boldsymbol{\ell}}{2}   \rp
	e^{-2\pi i B\lp  \boldsymbol{r} + \boldsymbol{\nu} + \frac{\Im(\boldsymbol{z})}{\tau_2} +\frac{\boldsymbol{\ell}}{2},\; \boldsymbol{\mu} + \frac{\boldsymbol{\ell}}{2}  + \frac{\Im (\boldsymbol{z} \bar{\tau})}{\tau_2} \rp}.
	\end{align*}
	The claim then follows by some simplification.	
\end{proof}

\subsection{Proof of Theorem \ref{partial1}}
To state a more precise version of Theorem \ref{partial1}, we let for \\$M\coloneqq\left(\begin{smallmatrix}	a & b \\ c & d \end{smallmatrix}\right) \in \textnormal{SL}_2(\mathbb{Z})$
\begin{equation*}
\chi_{\tau, w} (M) \coloneqq  \sqrt{\tfrac{i (w - \tau )}{(c\tau+d)(cw+d)}} 
\tfrac{\sqrt{c\tau+d}\sqrt{cw+d} }{\sqrt{i(w-\tau)}}.
\end{equation*}

\begin{thm}\label{Pht}
	We have for $M =\left(\begin{smallmatrix}
	a & b \\ c & d \end{smallmatrix}\right) \in \textnormal{SL}_2(\mathbb{Z})$
	and $m,r \in \mathbb{Z}$
	\begin{align*}
	\wh{\psi}\left(\tfrac{z}{c \tau+d}; \tfrac{a \tau + b}{c \tau+d},\tfrac{a w + b}{c w +d}\right) 
	&=\chi_{\tau, w} (M) \, \nu_\eta (M)^3   (c \tau + d)^{\frac12}  
	e^{\frac{\pi i c z^2}{c\tau+d}}
	\wh{\psi}(z;\tau,w), \\
	\wh{\psi}(z+m \tau + r;\tau,w) &= (-1)^{m+r} q^{-\tfrac{m^2}{2}} \zeta^{-m}
		\wh{\psi}(z;\tau,w). 
	\end{align*}
\end{thm}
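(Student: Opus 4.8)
The plan is to deduce Theorem~\ref{Pht} from Theorem~\ref{ThPh} (equivalently Theorem~\ref{thm:modularity_thm}) by recognizing $\wh{\psi}$ as the rank-one instance of $\wh{\Psi}_{Q,\boldsymbol{\mu},\boldsymbol{\ell},\boldsymbol{c}}$ and then promoting the transformation under the two generators of $\SL_2(\IZ)$ to one under an arbitrary $M$. Concretely, take $L=\IZ$ with bilinear form $B(m,n)=mn$, so that $Q(n)=\tfrac{n^2}{2}$, $A=(1)$ and $|L^*/L|=1$; take the characteristic vector $\boldsymbol{\ell}=1$ (indeed $Q(n)+\tfrac12 B(1,n)=\tfrac{n(n+1)}{2}\in\IZ$), the vector $\boldsymbol{c}=1$ (since $2Q(1)=1$), and $\boldsymbol{\mu}=0$. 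Writing $\boldsymbol{n}=n+\tfrac12$ and pulling the constant $e^{2\pi i(n+1/2)\cdot\frac12}=(-1)^n i$ out of $e^{2\pi i B(\boldsymbol{n},\,\boldsymbol{z}+\boldsymbol{\ell}/2)}$ gives $\Psi_{Q,0,1,1}(z;\tau)=\psi(z;\tau)$ and $\wh{\Psi}_{Q,0,1,1}(z;\tau,w)=\wh{\psi}(z;\tau,w)$. In particular $\wh{\psi}$ inherits the holomorphicity statement from Theorem~\ref{ThPh}, and the second (elliptic) identity of Theorem~\ref{Pht} is precisely the $N=1$ case of Theorem~\ref{ThPh}(1), using $(-1)^{2Q(m+r)}=(-1)^{(m+r)^2}=(-1)^{m+r}$, $q^{-Q(m)}=q^{-m^2/2}$, and $e^{-2\pi i B(m,z)}=\zeta^{-m}$.

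For the modular identity I would first check it on the generators $S=\pmat{0 & -1 \\ 1 & 0}$ and $T=\pmat{1 & 1 \\ 0 & 1}$. For $M=T$ one has $c\tau+d=cw+d=1$, hence $\chi_{\tau,w}(T)=1$ and $\tfrac{cz^2}{c\tau+d}=0$, while Theorem~\ref{ThPh}(2) gives $\wh{\psi}(z;\tau+1,w+1)=e^{2\pi i Q(\boldsymbol{\ell}/2)}\wh{\psi}(z;\tau,w)=e^{\pi i/4}\wh{\psi}(z;\tau,w)$, matching since $\nu_\eta(T)=e^{\pi i/12}$. For $M=S$ one has $c\tau+d=\tau$, $cw+d=w$, so $\chi_{\tau,w}(S)=\chi_{\tau,w}$ and $\tfrac{cz^2}{c\tau+d}=\tfrac{z^2}{\tau}$, while Theorem~\ref{ThPh}(2) gives $\wh{\psi}(\tfrac z\tau;-\tfrac1\tau,-\tfrac1w)=\chi_{\tau,w}\,(-i\tau)^{1/2}e^{\pi i z^2/\tau-\pi i/2}\wh{\psi}(z;\tau,w)$; using $(-i\tau)^{1/2}=e^{-\pi i/4}\tau^{1/2}$ on the principal branch for $\tau\in\IH$ and $e^{-\pi i/4}e^{-\pi i/2}=e^{-3\pi i/4}=\nu_\eta(S)^3$ (with $s(0,1)=0$) this is exactly the claimed formula for $M=S$.

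For a general $M\in\SL_2(\IZ)$, write $M$ as a word in $S$ and $T$ and compose the above transformations. Since $\wh{\psi}$ is a fixed, nonzero function, composing produces a well-defined automorphy factor, which is automatically a cocycle for the action $M\colon(\tau,w,z)\mapsto\bigl(\tfrac{a\tau+b}{c\tau+d},\tfrac{aw+b}{cw+d},\tfrac z{c\tau+d}\bigr)$; here one uses $i\bigl(\tfrac{aw+b}{cw+d}-\tfrac{a\tau+b}{c\tau+d}\bigr)=\tfrac{i(w-\tau)}{(c\tau+d)(cw+d)}$ (which holds because $\det M=1$) to see that the arguments stay off the branch cut of $\sqrt{i(w-\tau)}$ along the way. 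On the other hand, the candidate $j(M)\coloneqq\chi_{\tau,w}(M)\,\nu_\eta(M)^3(c\tau+d)^{1/2}e^{\pi i cz^2/(c\tau+d)}$ is also a cocycle: the factor $\nu_\eta(M)^3(c\tau+d)^{1/2}e^{\pi i cz^2/(c\tau+d)}$ is the classical automorphy factor of the odd Jacobi theta function $\vartheta$, and $\chi_{\tau,w}(M)$ — which is a sign, as $\chi_{\tau,w}(M)^2=1$ — is a cocycle because the weight-$\tfrac12$ metaplectic $2$-cocycle on $\SL_2(\IZ)$ is variable-independent, so via the displayed identity its $\tau$- and $w$-contributions in $\chi_{\tau,w}(M)$ cancel. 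Two cocycles agreeing on $S$ and $T$ agree everywhere, which yields Theorem~\ref{Pht}.

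The step I expect to require the most care is precisely this branch bookkeeping: tracking the principal branches of $\sqrt{c\tau+d}$, $\sqrt{cw+d}$ and $\sqrt{i(w-\tau)}$ through compositions of $S$ and $T$, and confirming that the accumulated sign is exactly $\chi_{\tau,w}(M)$ (and that the accumulated phase is $\nu_\eta(M)^3$ — a character of $\SL_2(\IZ)$ trivial on the generators). In practice it is convenient to run the composition on the auxiliary function $\wt{\psi}\coloneqq\sqrt{i(w-\tau)}\,\wh{\psi}$ of Theorem~\ref{thm:modularity_thm}, whose generator transformations carry no $\chi$-factor, and to reinstate $\chi_{\tau,w}(M)$ only at the end, after dividing back by $\sqrt{i(w-\tau)}$.
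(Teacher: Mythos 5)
Your proposal is correct and takes essentially the same approach as the paper: the paper's entire proof of Theorem~\ref{Pht} is the one-line specialization of Theorem~\ref{ThPh} to $L=\IZ$, $Q(n)=\frac{n^2}{2}$, $\ell=1$, $c=1$, $\mu=0$, leaving implicit the passage from the generators $S,T$ to a general $M$, which you carry out explicitly via the cocycle argument. The only blemish is your parenthetical describing $\nu_\eta(M)^3$ as ``a character of $\SL_2(\IZ)$ trivial on the generators'' --- it is a weight-$\frac32$ multiplier system with $\nu_\eta(T)^3=e^{\pi i/4}\neq 1$; what you presumably mean is that the \emph{ratio} of the accumulated phase to $\nu_\eta(M)^3$ is a character trivial on the generators, hence trivial.
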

\begin{proof} 
	The claim follows from Theorem \ref{ThPh} with $L=\IZ$, $Q(n)=\frac{n^2}{2}$, $\ell=1$, $c=1$, and $\mu=0$.
\end{proof}
	
\section{Proof of Theorem \ref{asTh} and Corollary \ref{asCor}}\label{sec:asThAndasCor}

In this section we prove a Rademacher-type exact formula for unimodal sequences. For this purpose,  in Section \ref{subsNon} we derive an obstruction to modularity equation for the false theta function $\psi(\t)$ in terms of an Eichler-type integral. Then in Section \ref{subsMordell} we cast this integral into a Mordell-type form where the $\t$-dependence of the integrand is only through an exponential term. This allows us to identify the growing pieces of the obstruction to modularity near the rationals and define a ``principal part'' for this term. We bound the remaining ``non-principal'' parts in Section \ref{subsBounds} and finish in Section \ref{subsCircle} by applying the Circle Method.

\subsection{Eichler integrals and modular transformations}\label{subsNon}
Define for $\varrho\in\IQ$
\begin{equation*}
f(\tau) \coloneqq - \frac{i}{2}  \frac{\psi(\tau)}{\eta(\tau)^2} , \qquad
g(\tau) \coloneqq  \frac{1}{\eta(\tau)^2} , \quad \mathrm{and} \quad
\mathcal{E}_\varrho (\tau) \coloneqq \int_{\varrho}^{\tau+i \infty+\e} \frac{\eta (\zz)^3}{\sqrt{i (\zz - \tau)}} \ddd \zz,
\end{equation*}
where the integration path avoids the branch-cut.
To apply the Circle Method we first determine the ``false'' modular behavior of $f$.
\begin{lem}\label{4.1}
	We have, for $M=\left(\begin{smallmatrix}
	a & b \\ c & d
	\end{smallmatrix}\right)\in\SL_2(\IZ)$ with $c>0$,
	\begin{equation*}
	f (\tau) =  e^{\frac{\pi i}{4}} \nu_\eta (M)^{-1}
	\sqrt{- i (c \tau + d)}  \left(
	f \lp \tfrac{a \tau+b}{c \tau + d} \rp
	- \frac{1}{2} g \lp \tfrac{a \tau+b}{c \tau + d} \rp
	\mathcal{E}_{\frac{a}{c}} \lp \tfrac{a \tau+b}{c \tau + d} \rp
	\right).
	\end{equation*}
\end{lem}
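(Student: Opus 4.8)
The plan is to derive this identity from the modular completion $\wh{\psi}$ constructed in Theorem~\ref{Pht}, together with the modular transformation of $\eta(\tau)^{-2}$. First I would specialize Theorem~\ref{Pht} to $z=0$: writing $\wh{\psi}(\tau,w) \coloneqq \wh\psi(0;\tau,w)$, the $S$-type transformation becomes
\begin{equation*}
\wh{\psi}\lp \tfrac{a\tau+b}{c\tau+d}, \tfrac{aw+b}{cw+d}\rp = \chi_{\tau,w}(M)\,\nu_\eta(M)^3 (c\tau+d)^{\frac12}\,\wh\psi(\tau,w).
\end{equation*}
Combining this with the well-known transformation $\eta\lp \tfrac{a\tau+b}{c\tau+d}\rp^{-2} = \nu_\eta(M)^{-2}(c\tau+d)^{-1}\eta(\tau)^{-2}$ gives that $\wh\psi/\eta^2$, with $\tau$ and $w$ transformed simultaneously, picks up only the automorphy factor $\chi_{\tau,w}(M)\,\nu_\eta(M)\,(c\tau+d)^{-\frac12}$. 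The next step is to take the vertical limit $w\to\tau+i\infty$. By \eqref{limit}, $\wh\psi(\tau,\tau+it+\e)\to\psi(\tau)$, but on the transformed side $w\mapsto \tfrac{aw+b}{cw+d}$ sends the ray $w=\tau+it+\e$ not to a vertical ray but to a path approaching the rational $\tfrac{a}{c}$; so the limit of $\wh\psi\lp\tfrac{a\tau+b}{c\tau+d}, \tfrac{aw+b}{cw+d}\rp$ is not simply $\psi\lp\tfrac{a\tau+b}{c\tau+d}\rp$. The discrepancy is exactly captured by the integral representation of $\wh\psi - \psi$.

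Concretely, I would use the Fourier/integral form behind \eqref{definePh}: the error function can be written via \eqref{rewriteF}, and the difference $\psi(\tau) - \wh\psi(\tau,w)$ (the ``error-to-sign'' defect) can be expressed, after interchanging sum and integral and applying the theta inversion for the weight $\tfrac32$ unary theta function $\eta^3$ (recall $\eta(\tau)^3 = \sum_{n\geq 0}(-1)^n(n+\tfrac12)q^{\frac{(2n+1)^2}{8}}$ up to normalization), as a non-holomorphic Eichler integral of the shape \eqref{nonE} with shadow proportional to $\eta^3$. Passing to the limit where $w$ is pushed to $i\infty$ along the appropriate path, this Eichler integral over $(-\bar w, i\infty)$ becomes the holomorphic Eichler integral $\mathcal E_\varrho$ over the path from the rational $\varrho = \tfrac{a}{c}$ to $\tau + i\infty + \e$. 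Tracking the automorphy factors: $\chi_{\tau,w}(M)$ degenerates in the limit to a sign times $e^{\pi i/4}$, and collecting $\chi$, $\nu_\eta(M)$, and the $\eta^{-2}$ factors reproduces $e^{\frac{\pi i}{4}}\nu_\eta(M)^{-1}\sqrt{-i(c\tau+d)}$ on the right, with the $-\tfrac12 g\mathcal E_{a/c}$ term coming from the $\tfrac{i}{2}$ normalization in $f = -\tfrac{i}{2}\psi/\eta^2$ and the integral defect.

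The main obstacle I expect is the careful bookkeeping of the limit $w\to\tau+i\infty$ on the $S$-transformed side: one must justify interchanging this limit with the lattice sum (using the uniform convergence bounds established in the proof of Theorem~\ref{thm:modularity_thm}), identify precisely which integration contour the image path $\tfrac{aw+b}{cw+d}$ traces out as $w$ runs up a vertical ray, and confirm that the limiting contour is homotopic (avoiding the branch cut) to the straight path from $\tfrac{a}{c}$ to $\tau+i\infty+\e$ appearing in $\mathcal E_{a/c}$. A secondary subtlety is matching the multiplier: one must verify that $\chi_{\tau,w}(M)$, which is a locally constant $\pm1$-type phase correcting branch-cut ambiguities, limits to exactly $e^{\pi i/4}$ (not $e^{-3\pi i/4}$) under the relevant choice of branches, and that combining it with $\nu_\eta(M)^3\cdot\nu_\eta(M)^{-2} = \nu_\eta(M)$ gives the stated $\nu_\eta(M)^{-1}$ after using $\nu_\eta(M)^{-1} = \overline{\nu_\eta(M)}$ and the transformation of $\sqrt{c\tau+d}$. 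Once these analytic points are pinned down, the algebraic identity falls out by direct substitution.
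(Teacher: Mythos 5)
Your proposal follows essentially the same route as the paper: specialize Theorem \ref{Pht} to $z=0$, use the integral representation $\wh{\psi}(\tau,w)=\psi(\tau)-i\int_w^{\tau+i\infty+\e}\eta(\mathfrak z)^3(i(\mathfrak z-\tau))^{-1/2}\,\ddd\mathfrak z$ (obtained directly from $\tfrac{\del}{\del w}\wh{\psi}=i\,\eta(w)^3(i(w-\tau))^{-1/2}$, no theta inversion needed), take $w\to\tau+i\infty+\e$ so that the transformed lower endpoint tends to $\tfrac{a}{c}$, and divide by $\eta^2$. One small correction to your bookkeeping: for $c>0$ the limit of $\chi_{\tau,w}(M)$ is exactly $1$, and the factor $e^{\pi i/4}$ in the lemma arises instead from rewriting $(c\tau+d)^{1/2}=e^{\pi i/4}\sqrt{-i(c\tau+d)}$.
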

\begin{proof}\let\qed\relax
Plugging $z=0$ into \eqref{definePh} gives 
\begin{equation*}
\wh{\psi}(\tau,w) =  i \sum_{n\in\IZ} \erf\left(-i \sqrt{\pi i(w-\tau)}\left(n+\tfrac12 \right)\right) (-1)^n   q^{\frac12\left(n+\frac12\right)^2},
\end{equation*}
where $\wh{\psi}(\tau,w)\coloneqq\wh\psi(0;\tau,w)$. Using that $\erf'(x) = \frac{2}{\sqrt{\pi}}e^{-x^2}$, we obtain, away from the branch cut
\begin{align*}
\frac{\del}{\del w} \wh{\psi}(\tau,w) &= \frac{i}{\sqrt{i (w-\tau )}}
\sum_{n \in \IZ} (-1)^n \lp n + \tfrac12 \rp e^{\pi i  \lp n + \frac12 \rp^2w} = \frac{i}{\sqrt{i (w-\tau)}}   \eta (w)^3.
\end{align*}
Using \eqref{limit}, we have 
\begin{equation}\label{PhP}
\wh{\psi}(\tau,w)  = \psi(\tau) - i \int_w^{\tau + i \infty+ \e} \frac{\eta (\zz)^3}{\sqrt{i (\zz-\tau)}} \ddd \zz,
\end{equation}
with an integration path that avoids the branch-cut.

Our next goal is to determine the  term  preventing  $\psi(\tau)$ from being a modular form. For this set $z=0$ in Theorem \ref{Pht} to obtain
\begin{equation*}
\wh{\psi}\left( \tfrac{a \tau + b}{c \tau+d},\tfrac{a w + b}{c w +d}\right) 
= \chi_{\tau, w} (M)   \nu_\eta (M)^3   (c \tau + d)^{\frac12}   \wh{\psi}(\tau,w).
\end{equation*}
Plugging \eqref{PhP} into this directly implies that
\begin{align*}
&\psi(\tau) -\chi_{\tau, w} (M)   \nu_\eta (M)^{-3}   (c \tau + d)^{-\frac12}
\psi \lp \tfrac{a \tau + b}{c \tau + d} \rp   \\
& \quad  =
i \int_w^{\tau+i \infty+\e} \frac{\eta (\zz)^3}{\sqrt{i (\zz - \tau)}} \ddd \zz 
- i \chi_{\tau,w}(M)   \nu_\eta (M)^{-3}   (c \tau + d)^{-\frac12}
\int_{\frac{aw+b}{cw+d}}^{\frac{a \tau + b}{c \tau + d}+ i \infty + \e} \frac{\eta (\zz)^3}{\sqrt{i \left(\zz - \tfrac{a \tau + b}{c \tau+d} \right)}} \ddd \zz .
\end{align*}
Now we take $w \to \tau + i \infty + \e$ in this equation and assume that $c>0$. Noting that
\begin{equation*}
\lim_{w \to \tau + i \infty + \e} \chi_{\tau, w} (M)=1,
\end{equation*}
we get
\begin{align*}
&\psi(\tau) - 
\nu_\eta (M)^{-3}   (c \tau + d)^{-\frac12}
\psi \lp \tfrac{a \tau + b}{c \tau + d} \rp  =
- i   \nu_\eta (M)^{-3}   (c \tau + d)^{-\frac12}
\int_{\frac{a}{c}}^{\frac{a\tau+b}{c \tau +d} + i \infty + \e} \frac{\eta (\zz)^3}{\sqrt{i \left(\zz - \tfrac{a \tau + b}{c \tau+d}\right)}} \ddd \zz .
\end{align*}
The claim then follows using the transformation law,\\[5pt]
\begin{minipage}{0.95\textwidth}
\begin{equation}\label{mulet}
\eta \lp \frac{a \tau + b}{c \tau + d} \rp = \nu_\eta (M) \, (c \tau + d)^{\frac12} \, \eta(\tau).
\end{equation}
\end{minipage}
\vskip-1.5em\hspace{\textwidth} \hspace{-1cm}\qedsymbol
\vskip1em
\end{proof}

\subsection{The obstruction to modularity term as a Mordell-type integral}\label{subsMordell}
In this section we rewrite the error integrals occurring in Lemma \ref{lemInd} as Mordell-type integrals.
\begin{lem}\label{4.2}
	We have, for $\varrho\in\IQ$ and  $V\in\mathbb{C}$ with $\mathrm{Re}(V)>0$,
	\begin{equation*}
	\mathcal{E}_{\varrho} \lp \varrho + i V\rp  =  -\frac{i}{\pi}
	\sum_{n \in \IZ} (-1)^n  
	e^{\pi i \left( n + \frac{1}{2} \right)^2\varrho}  
	\lim_{\e \to 0^+}
	\int_{-\infty}^\infty \frac{e^{-\pi V x^2}}{x-\lp n+ \frac{1}{2} \rp (1+i\e)} \ddd x .
	\end{equation*}
\end{lem}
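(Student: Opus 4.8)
The plan is to start from the definition $\mathcal{E}_\varrho(\tau) = \int_\varrho^{\tau + i\infty + \e} \frac{\eta(\zz)^3}{\sqrt{i(\zz-\tau)}}\ddd\zz$, specialize to $\tau = \varrho + iV$ with $\mathrm{Re}(V) > 0$, and insert the product/sum expansion $\eta(\zz)^3 = \sum_{n\ge 0}(-1)^n(n+\tfrac12)e^{\pi i (n+\frac12)^2 \zz}$ (equivalently $\eta(\zz)^3 = -\frac{1}{2\pi i}\sum_{n\in\IZ}(-1)^n(2n+1)\cdot\pi i\, e^{\pi i(n+\frac12)^2\zz}$, matching the Fourier expansion already used in the proof of Lemma \ref{4.1}). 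First I would parametrize the contour as $\zz = \varrho + iV + it$ for $t$ running over a suitable path from $0$ to $\infty$ that avoids the branch cut of $\sqrt{i(\zz-\tau)} = \sqrt{-t}$; after this substitution each term of the series produces an integral of the shape $\int (\text{const})\, e^{\pi i (n+\frac12)^2(\varrho + iV)} e^{-\pi(n+\frac12)^2 t}\, t^{-1/2}\,\ddd t$ up to the contour deformation needed to handle $t^{-1/2}$. Interchanging sum and integral is justified by absolute convergence (the $\eta^3$ series converges rapidly and the $t^{-1/2}$ singularity at the origin is integrable), which I would note but not belabor.

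The key computational step is to recognize the resulting one-dimensional integral as a known Laplace-type transform. Concretely, for $a > 0$ one has the identity $\int_0^\infty t^{-1/2} e^{-\pi a^2 t}\,\ddd t \cdot(\text{phase from the branch cut}) = \text{a Gaussian-over-linear integral}$; more precisely I expect to use the standard representation
\begin{equation*}
\frac{1}{\sqrt{i(\zz - \tau)}} \quad\longleftrightarrow\quad \text{an integral }\int_{-\infty}^\infty \frac{e^{-\pi V x^2}}{x - c(1+i\e)}\,\ddd x
\end{equation*}
obtained by writing the Eichler integral of a weight-$\tfrac32$ unary theta function in its Mordell form — this is exactly the kind of manipulation Zwegers uses: the non-holomorphic Eichler integral $\int_{-\bar\tau}^{i\infty}\theta(w)(-i(w+\tau))^{-1/2}\ddd w$ of \eqref{nonE} has a companion ``holomorphic'' Eichler integral at a rational point which can be rewritten, via completing the square and a change of variables, as $\sum_n (\text{coeff}) \int_{\IR}\frac{e^{-\pi V x^2}}{x - (n+\frac12)}\,\ddd x$. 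So the heart of the argument is: fix $n$, substitute the series term into the contour integral, rotate/deform the $t$-contour to the real line, complete the square in the exponent, and match the result against the Cauchy-principal-value integral $\lim_{\e\to 0^+}\int_{-\infty}^\infty \frac{e^{-\pi V x^2}}{x - (n+\frac12)(1+i\e)}\,\ddd x$; the factor $-\frac{i}{\pi}$ and the sign $(-1)^n$ should drop out of bookkeeping the constants in $\eta^3 = \frac{1}{2\pi i}\sum (-1)^n(2n+1)\pi i\,e^{\cdots}$ together with the Gaussian normalization $\int e^{-\pi V x^2}\ddd x = V^{-1/2}$ and the standard evaluation of $\int t^{-1/2}e^{-\pi a t}\ddd t = a^{-1/2}$.

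The main obstacle I anticipate is the careful tracking of branch cuts and the $i\e$-prescription: the contour in the definition of $\mathcal{E}_\varrho$ deliberately avoids the branch cut of $\sqrt{i(\zz-\tau)}$, and turning this into the real-line integral with the $(1+i\e)$ shift in the denominator requires being precise about which side of the pole at $x = n+\tfrac12$ the deformed contour passes, and hence which square-root branch is selected. A clean way to organize this is to split $\sum_{n\in\IZ}$ into $n \ge 0$ and $n < 0$ (equivalently pair $n \leftrightarrow -1-n$), handle the convergent half-line integral $\int_0^\infty$ for each sign of $n+\tfrac12$, and only at the end reassemble into the symmetric principal-value integral over $\IR$ — the $\lim_{\e\to 0^+}$ and the principal value are precisely what make the reassembled expression well-defined even though each pole lies on the contour. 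Once the branch/pole conventions are pinned down, the rest is the routine Gaussian and Gamma-function evaluation sketched above.
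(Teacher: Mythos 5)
Your overall route --- insert the Fourier expansion of $\eta^3$ into the defining integral, integrate term by term, evaluate each one--dimensional integral, and convert to the Gaussian-over-linear (Mordell) form via an $i\e$ prescription --- is the paper's route, and your attention to the branch-cut and pole bookkeeping is well placed. But there is a genuine gap at exactly the step you propose to ``note but not belabor'': the interchange of sum and integral is \emph{not} justified by absolute convergence. Near the lower endpoint $\zz=\varrho$ of the contour, the individual summands $\lp n+\tfrac12\rp e^{\pi i (n+\frac12)^2 \zz}\big/\sqrt{i(\zz-\tau)}$ do not decay in $n$ --- only their sum $\eta^3$ decays there, because $\varrho$ is a cusp --- so $\sum_n \int |\cdot|$ diverges and Fubini does not apply. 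Correspondingly, the termwise-integrated series is only \emph{conditionally} convergent: the $n$-th term of the claimed identity behaves like $\frac{(-1)^n e^{\pi i (n+1/2)^2 \varrho}}{\pi (n+\frac12)\sqrt{V}}$ for large $|n|$, hence is $O(1/|n|)$ and not absolutely summable. This is precisely why the hypothesis $\varrho\in\IQ$ appears in the statement; it plays no role in your sketch, which is a symptom of the missing analysis. The paper handles this by shifting the lower endpoint of integration by $\d>0$ (which restores absolute convergence and legitimizes the interchange), evaluating the integrals in closed form as $\sgn\lp n+\tfrac12\rp+\erf\lp i\lp n+\tfrac12\rp\sqrt{\pi(V-\d)}\rp$, splitting off the non-absolutely-summable leading term $\frac{i e^{\pi(n+1/2)^2(V-\d)}}{\pi(n+1/2)\sqrt{V-\d}}$ of the error function's asymptotic expansion, proving convergence of the resulting series at $\d=0$ by writing $\varrho=\frac{h}{k}$ and exploiting the periodicity of $(-1)^n e^{\pi i (n+1/2)^2 h/k}$ modulo $2k$ to pair terms, and finally invoking Abel's theorem to take $\d\to0^+$. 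Without an argument of this type your derivation establishes only a formal identity.

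A secondary point: the final conversion rests on the identity
\begin{equation*}
e^{-\pi s^2 V}\lp \sgn(s)+\erf\lp i s\sqrt{\pi V}\rp\rp
= -\frac{i}{\pi}\lim_{\e\to0^+}\int_{-\infty}^{\infty}\frac{e^{-\pi V x^2}}{x-s(1+i\e)}\,\ddd x ,
\end{equation*}
applied termwise. You describe the right-hand side as a Cauchy principal value, but it is not one: the factor $(1+i\e)$ pushes the pole above or below the real axis according to $\sgn(s)$, and it is exactly this Sokhotski--Plemelj-type half-residue that produces the $\sgn(s)$ term on the left. Pinning down this identity (rather than a symmetric principal value) is what makes the signs and the $-\frac{i}{\pi}$ prefactor come out correctly.
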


\begin{proof}
We write 
\begin{equation*}
\mathcal{E}_{\varrho} \lp\varrho + i V\rp 
 = i \int_{-V}^{\infty - i \e} \frac{\eta \lp  \varrho + i (\zz+V) \rp^3}{\sqrt{-\zz}}  \ddd \zz,
\end{equation*} 
where the path of  integration should avoid the branch-cut as in Figure \ref{fig01:integration_path}.
\begin{figure}[h!]
	\vspace{-10pt}
	\centering
	\includegraphics[scale=0.20]{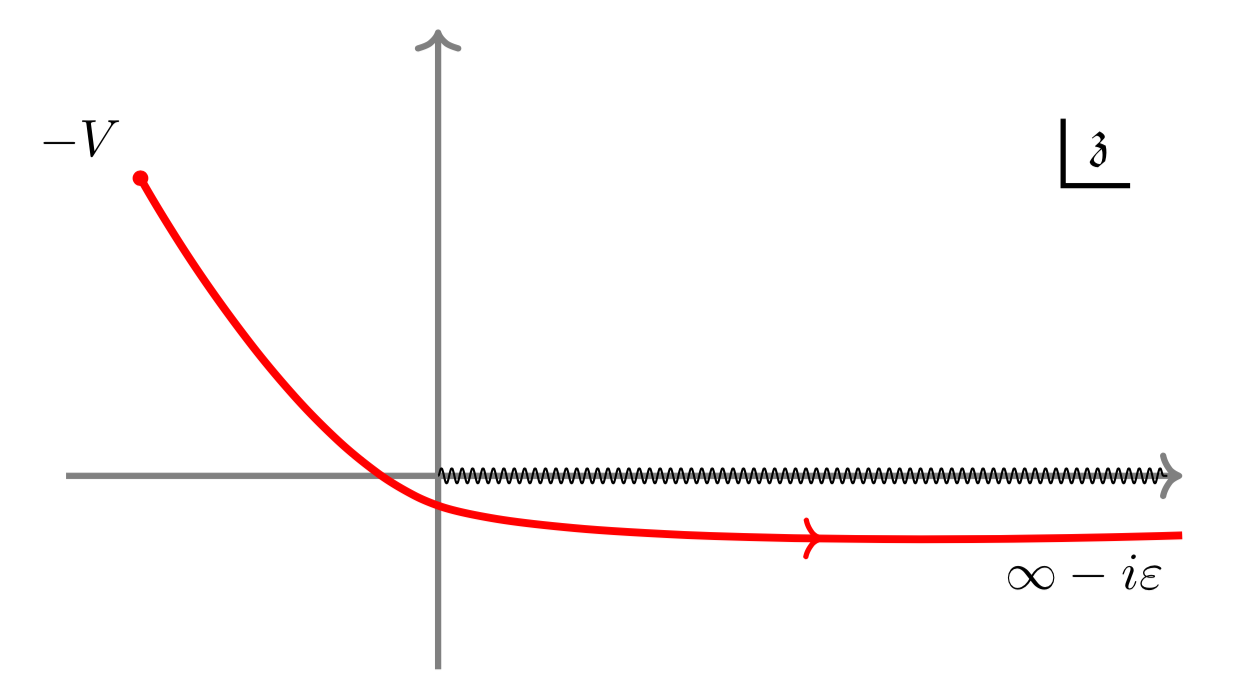}
	\vspace{-10pt}
	\caption{ Integration path and the branch-cut on the $\zz$-plane. }
	\label{fig01:integration_path}
\end{figure}

Plugging in the Fourier expansion of $\eta$, we obtain 
\begin{equation}\label{eq:error_term_series}
\mathcal{E}_{\varrho} \lp \varrho + i V\rp 
= 
i \int_{-V}^{\infty - i \e} \sum_{n \in \IZ} (-1)^n \lp n + \tfrac{1}{2} \rp
\frac{e^{- \pi \left(n + \frac{1}{2}\right)^2 (\zz+V)+\pi i \left( n + \frac{1}{2} \right)^2\varrho}}{\sqrt{-\zz}}  \ddd \zz .
\end{equation}
Recall that the exponential decay of $\eta \lp  \varrho + i (\zz+V) \rp^3$ as $\zz \to -V$ and $\zz \to \infty$ ensures that the integral is absolutely convergent. In the series expansion in \eqref{eq:error_term_series}, the individual summands are exponentially decaying as $\zz \to \infty$ but we lose that property as $\zz \to -V$. To be able to interchange the integral and the sum, we rewrite 
\begin{equation*}
\mathcal{E}_{\varrho} \lp \varrho + i V\rp 
= 
i \lim_{\d \to 0^+} 
\int_{-V+\d}^{\infty - i \e} \sum_{n \in \IZ} (-1)^n \lp n + \tfrac{1}{2} \rp
\frac{e^{- \pi \left(n + \frac{1}{2}\right)^2 (\zz+V) + \pi i \left( n + \frac{1}{2} \right)^2\varrho}}{\sqrt{-\zz}}  \ddd \zz.
\end{equation*}
We now have absolute convergence (as is clear from the explicit evaluation of the integrals below)
and can exchange the sum and the integral to obtain
\begin{equation}  \label{eq:error_limit_form}
\mathcal{E}_{\varrho} \lp \varrho + i V\rp  = 
i \lim_{\d \to 0^+} 
\sum_{n \in \IZ} (-1)^n \lp n + \tfrac{1}{2} \rp 
e^{\pi i \left( n + \frac{1}{2} \right)^2\varrho- \pi \left( n + \frac{1}{2} \right)^2V} 
\int_{-V+\d}^{\infty - i \e} 
\frac{e^{- \pi  \left( n + \frac{1}{2} \right)^2\zz}}{\sqrt{-\zz}}  \ddd \zz .
\end{equation}

To evaluate the integral we rewrite it as
\begin{align*}
\int_{-V+\d}^{\infty - i \e} 
\frac{e^{- \pi \left( n + \frac{1}{2} \right)^2 \zz}}{\sqrt{-\zz}}  \ddd \zz 
&= 
\int_{0}^{\infty - i \e} 
\frac{e^{-\pi \left( n + \frac{1}{2} \right)^2 \zz}}{\sqrt{-\zz}}  \ddd \zz 
+
\int_{-V+\d}^{0} 
\frac{e^{- \pi \left( n + \frac{1}{2} \right)^2 \zz}}{\sqrt{-\zz}}  \ddd \zz   
\notag
\\
&= -\frac{i}{n+\frac{1}{2}} 
\lp  \sgn \lp n+\tfrac{1}{2} \rp + \erf \lp i \lp n + \tfrac{1}{2} \rp \sqrt{\pi (V-\d)} \rp \rp .
\end{align*}
Plugging this into \eqref{eq:error_limit_form}, we obtain
\begin{multline}\label{Ed}
\mathcal{E}_{\varrho} \lp \varrho + i V\rp \\
 = 
\lim_{\d \to 0^+} 
\sum_{n \in \IZ} (-1)^n  
e^{\pi i \left( n + \frac{1}{2} \right)^2\varrho - \pi \left( n + \frac{1}{2} \right)^2V} \lp  \sgn \lp n+\tfrac{1}{2} \rp + \erf \lp i \lp n + \tfrac{1}{2} \rp \sqrt{\pi (V-\d)} \rp \rp .
\end{multline}
Using that $\erf (i z) = \frac{2i}{\sqrt{\pi}} \int_0^z e^{t^2} \ddd t$, yields the asymptotic behavior 
\begin{equation*}
\erf (iz) = \frac{ie^{z^2}}{\sqrt{\pi}z} \lp 1 + O \lp |z|^{-2} \rp \rp ,
\end{equation*}
if $| \mathrm{Arg} (\pm z) | < \frac{\pi}{4}$ as $|z| \to \infty$. Therefore, we need to carefully take $\d \to 0^+$ as the expression at $\d = 0$ is not absolutely convergent. Separating this main term of the error function as
\begin{equation}\label{split}
\lp \erf \lp i \lp n + \tfrac{1}{2} \rp \sqrt{\pi (V-\d)} \rp 
- \frac{i   e^{\pi \left( n + \tfrac{1}{2} \right)^2 (V-\d)} }{\pi \lp n+\tfrac{1}{2} \rp \sqrt{V-\d}} \rp 
+ \frac{i   e^{\pi \left( n + \tfrac{1}{2} \right)^2 (V-\d)} }{\pi \lp n+\tfrac{1}{2} \rp \sqrt{V-\d}}  ,
\end{equation}
we find that everything except for the contribution from this last term is absolutely and uniformly convergent on compact subsets of $\mathrm{Re} (V) > 0$ and $0 \leq \d \leq \d_0$ for sufficiently small $\d_0$. That means we can plug in $\d = 0$ for these terms to take the limit. 

Thus we focus on the last term in \eqref{split} whose contribution to $\mathcal{E}_{\varrho} \lp \varrho + i V\rp$ is 
\begin{equation*}
\lim_{\d \to 0^+}  \frac{i}{\pi \sqrt{V-\d}}
\sum_{n \in \IZ}  
\frac{(-1)^n  e^{\pi i \left( n + \frac{1}{2} \right)^2\varrho}}{n+\frac{1}{2}}
e^{- \pi \left( n + \frac{1}{2} \right)^2 \d}  .
\end{equation*}
The series is absolutely convergent for any $\mathrm{\d} >0$. If the corresponding series is also convergent for $\d =0$, then by Abel's Theorem (viewing it as a power series in $e^{-\pi \d}$), the limit as $\d \to 0^+$ is simply the value at $\d = 0$. To prove convergence at $\d = 0$, let $\varrho = \frac{h}{k}$ with $\gcd (h,k) = 1$ and $k > 0$ and consider for $\nu_1,\nu_2 \in \IN$ the following sum
\begin{equation*}
\sum_{-\nu_1 \leq n \leq \nu_2}  
\frac{(-1)^n   e^{\pi i \left( n + \frac{1}{2} \right)^2\frac{h}{k}}}{n+\frac{1}{2}}
= \frac{k}{2} \sum_{-\nu_1 \leq n \leq \nu_2}
\frac{(-1)^n    e^{\pi i \left( n + \frac{1}{2} \right)^2\frac{h}{k}} }
{ \lp n+\tfrac{1}{2} \rp \lp n+k+\tfrac{1}{2} \rp  }+O\left(\tfrac{k}{\nu_1}+\tfrac{k}{\nu_2}\right).
\end{equation*}
The sum on the right-hand side is absolutely convergent as $\nu_1, \nu_2 \to \infty$.

We can now set $\d = 0$ in \eqref{Ed} and hence obtain
\begin{equation}\label{eq:modular_error_erf}
\mathcal{E}_{\varrho} \lp \varrho + i V\rp  = 
\sum_{n \in \IZ} (-1)^n  
e^{\pi i \left( n + \frac{1}{2} \right)^2\varrho - \pi \left( n + \frac{1}{2} \right)^2V} 
\lp  \sgn \lp n+\tfrac{1}{2} \rp + \erf \lp i \lp n + \tfrac{1}{2} \rp \sqrt{\pi V} \rp \rp .
\end{equation}

Next we use the following identity, which is valid for $ s \in \IR \setminus \{ 0 \}$ and $\mathrm{Re} (V) > 0$,
\begin{equation}\label{eq:erf_integral_form}
e^{- \pi s^2 V} \lp \sgn (s) + \erf \lp i s \sqrt{\pi V} \rp \rp 
=
-\frac{i}{\pi}
\lim_{\e \to 0^+}
\int_{-\infty}^\infty \frac{e^{-\pi V x^2}}{x-s(1+i\e)} \ddd x.
\end{equation}
Plugging \eqref{eq:erf_integral_form} into \eqref{eq:modular_error_erf} gives the claim.
\end{proof}

\subsection{Bounds on the Mordell-type integrals}\label{subsBounds}
Let $\varrho = \frac{h'}{k}$ where $h',k$ are integers satisfying $\mathrm{gcd}(h',k) = 1$ and $k>0$. Given a real number $d$ with $0 \leq d < \frac{1}{8}$ we split the obstruction to modularity from Lemma \ref{4.1} (as rewritten in Lemma \ref{4.2}) as follows
\begin{equation*}
e^{2 \pi d V}  \mathcal{E}_{\frac{h'}{k} } \lp \tfrac{h'}{k} + i V\rp  
=
\mathcal{E}^*_{\frac{h'}{k},d} \lp \tfrac{h'}{k} + i V\rp  
+
\mathcal{E}^{\rm e}_{\frac{h'}{k},d} \lp \tfrac{h'}{k} + i V\rp ,
\end{equation*}
where
\begin{align}\label{4.14}
\mathcal{E}^*_{\frac{h'}{k},d} \lp \tfrac{h'}{k} + i V\rp
&:= 
-\frac{i}{\pi}   e^{2 \pi d V} 
\sum_{n \in \IZ} (-1)^n  
e^{\pi i \left( n + \frac{1}{2} \right)^2 \frac{h'}{k}}  
\lim_{\e \to 0^+}
\int\displaylimits_{-\sqrt{2d}}^{\sqrt{2d}} \frac{e^{-\pi V x^2}}{x-\lp n+ \tfrac{1}{2} \rp (1+i\e)} \ddd x ,\\
\notag
\mathcal{E}^{\rm e}_{\frac{h'}{k},d} \lp \tfrac{h'}{k} + i V\rp
&:= 
-\frac{i}{\pi}   e^{2 \pi d V} 
\sum_{n \in \IZ} (-1)^n  
e^{\pi i \left( n + \frac{1}{2} \right)^2\frac{h'}{k}}  
\lim_{\e \to 0^+}
\int\displaylimits_{|x| \geq \sqrt{2d} } \frac{e^{-\pi V x^2}}{x-\lp n+ \tfrac{1}{2} \rp (1+i\e)} \ddd x .
\end{align}
In the following lemma, we  bound $\mathcal{E}^{\rm e}_{\frac{h'}{k},d} (\frac{h'}{k} + i V)$, by which we also prove its convergence and that of $\mathcal{E}^*_{\frac{h'}{k},d} (\frac{h'}{k} + i V)$, making the splitting of $e^{2 \pi d V}  \mathcal{E}_{\frac{h'}{k} } (\frac{h'}{k} + i V)$ justified.

\begin{lem}\label{lemInd}
	For $0 \leq d < \frac{1}{8}$, $h',k \in \IZ$, $\mathrm{gcd}(h',k) = 1$, $k>0$, and $\mathrm{Re} (V) \geq 1$ we have
	\begin{equation*}
	\mathcal{E}^{\rm e}_{\frac{h'}{k},d} \lp \tfrac{h'}{k} + i V\rp = O \left(\log (k)\right),
	\end{equation*}
	where the bound is independent of $h'$ and $V$.
\end{lem}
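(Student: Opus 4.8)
The plan is to estimate the two pieces of the integral $\int_{|x|\ge\sqrt{2d}}$ against which $\mathcal{E}^{\rm e}_{\frac{h'}{k},d}$ is built, after interchanging sum and integral. Write $s_n := n+\frac12$ and fix $V$ with $\mathrm{Re}(V)\ge 1$. The first step is to split the sum over $n$ into the ``diagonal'' range where $|s_n|$ is comparable to the integration variable $x$ and the ``off-diagonal'' range where they are far apart. More precisely, I would write
\begin{equation*}
\frac{1}{x-s_n(1+i\e)} = \frac{1}{x-s_n} + \text{(correction)} ,
\end{equation*}
take $\e\to 0^+$ using the Sokhotski--Plemelj relation so that the $x$-integral becomes a principal value plus a $\pm i\pi$ residue contribution supported on $|s_n|\ge\sqrt{2d}$, and note that the residue contribution, after summing over $n$, reassembles into the tail of a theta-type series $\sum_{|s_n|\ge\sqrt{2d}}(-1)^n e^{\pi i s_n^2 h'/k} e^{2\pi dV - \pi s_n^2 V}$, which is $O(1)$ uniformly in $k$ since $\mathrm{Re}(V)\ge 1 > 2d\cdot(2d)^{-1}$-type bounds force geometric decay once $|s_n|^2\ge 2d$. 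So the real content is bounding the principal-value integral piece.

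For that piece I would use the standard ``Gauss-sum out of a $q$-series'' technique that underlies Rademacher-type estimates: group the sum over $n$ into residue classes modulo $2k$ (or $k$), so that the Gauss sum $\sum_{n\bmod 2k}(-1)^n e^{\pi i s_n^2 h'/k}$ factors out with absolute value $O(\sqrt{k})$, and the remaining sum over the residue class becomes, by Poisson summation in that arithmetic progression, a sum whose terms decay and which contributes the $\log(k)$. Concretely, after pulling out the Gauss sum one is left with something like
\begin{equation*}
\frac{1}{k}\sum_{r\bmod 2k} (\text{Gauss sum}) \ \mathrm{PV}\!\int_{|x|\ge\sqrt{2d}} e^{-\pi V x^2} \sum_{m\in\IZ}\frac{1}{x - (r/k) - 2m - \frac12}\,\ddd x,
\end{equation*}
and the inner sum over $m$ is a cotangent, $\frac{\pi}{2k}\cot(\tfrac{\pi}{2k}(kx - r - \tfrac{k}{2}))$ up to normalization, whose principal-value integral against the Gaussian $e^{-\pi V x^2}$ over $|x|\ge\sqrt{2d}$ is bounded: away from the poles the cotangent is $O(1/\mathrm{dist})$ and the poles at $x$ near $(r+\tfrac{k}{2})/k + (2\IZ)/k$ each contribute $O(1/k)\cdot$(nearby Gaussian value), so summing over the $O(\sqrt{V})$-many poles inside the effective Gaussian window and over $r$ yields $O(\log k)$ after the harmonic-sum $\sum_{j}\frac1j$ over the pole index $j$ truncated at $j\asymp k$. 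The factor $e^{2\pi dV}$ is harmless because $d<\frac18$ guarantees $2\pi dV$ is dominated by the Gaussian decay $e^{-\pi V x^2}$ already for $|x|\ge\sqrt{2d}$, i.e. $e^{2\pi dV - \pi V x^2}\le 1$ on the integration range.

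The main obstacle I expect is making the principal-value interchange and the cotangent-pole bookkeeping fully rigorous and \emph{uniform in $h',k,V$} simultaneously: one must be careful that the $\e\to 0^+$ limit commutes with the sum over $n$ (the series is only conditionally convergent at $\e=0$, exactly the subtlety already handled in Lemma~\ref{4.2}), and that the principal-value integral near $x=\sqrt{2d}$, where the truncation boundary may sit close to a cotangent pole, does not produce a spurious large term — here one uses $d<\frac18$ so the boundary $\sqrt{2d}<\frac12$ stays a fixed positive distance from the lattice of poles uniformly in $k$ for the relevant residue classes, or one absorbs the finitely many bad $r$ into the $O(\log k)$. Once uniformity in $V$ on $\mathrm{Re}(V)\ge 1$ is secured (so that the number of effective poles is $O(1)$ in the $\mathrm{Re}(V)$-direction and the Gaussian tames the rest), the bound $\mathcal{E}^{\rm e}_{\frac{h'}{k},d}(\tfrac{h'}{k}+iV) = O(\log k)$ with implied constant independent of $h'$ and $V$ follows, and the convergence of $\mathcal{E}^*$ follows by subtracting the (absolutely convergent, finite-interval) $\mathcal{E}^*$-integral from the now-controlled full integral.
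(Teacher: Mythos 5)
Your overall architecture parallels the paper's: both isolate a residue/theta contribution that is $O(1)$ by absolute convergence (your Sokhotski--Plemelj delta terms correspond to the residues the paper picks up when rotating contours), and both reduce the remaining conditionally convergent piece to a sum over residues $r \pmod{2k}$ whose cotangent values produce the harmonic sum $\sum_r \frac{1}{r+\frac12} = O(\log k)$. The decomposition itself is genuinely different, though: the paper first symmetrizes $n \leftrightarrow -n-1$, substitutes $u = x^2-2d$, and uses the partial fraction
\begin{equation*}
\frac{1}{u+2d-s_n^2(1+i\e)^2} = \frac{u+2d}{s_n^2(1+i\e)^2\left(u+2d-s_n^2(1+i\e)^2\right)} - \frac{1}{s_n^2(1+i\e)^2},
\end{equation*}
so that the second term makes the $x$-integral completely independent of $n$ (trivially uniform in $V$) while the first gains enough decay in $n$ to be handled by rotating the contour to $e^{\pm \pi i/4}\IR^+$. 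Your version instead keeps the $V$-dependent Gaussian entangled with the $r$-dependent cotangent inside a principal-value integral, and this is where the gaps are.

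Two concrete problems. First, the claim that the Gauss sum ``factors out with absolute value $O(\sqrt{k})$'' is both false (the inner sum over $m$ depends on $r$, so nothing factors out) and, if used as stated, would give $O(\sqrt{k})$ rather than $O(\log k)$; the mechanism that actually works --- and that your displayed formula and final pole count implicitly use --- is to bound each phase $(-1)^r e^{\pi i (r+1/2)^2 h'/k}$ trivially by $1$ and sum $\left|\cot\left(\tfrac{\pi}{2k}(x-r-\tfrac12)\right)\right|$ over $r$. Second, and more seriously, the uniformity in $V$ of the near-pole PV contributions is exactly the nontrivial point and is asserted rather than proved: for $V = 1+iT$ with $|T|$ large, the naive estimate of $\PV\int \frac{e^{-\pi V x^2}}{x-x_0}\,\ddd x$ near $x_0$ via the Lipschitz constant of $e^{-\pi Vx^2}$ blows up like $|V|$ (and a slightly better estimate still gives $\log|V|$); the uniform bound is true but requires the exact evaluation $\PV\int_{\IR} \frac{e^{-\pi Vx^2}}{x-s}\,\ddd x = i\pi e^{-\pi V s^2}\erf\left(i s\sqrt{\pi V}\right)$ together with Fresnel-type bounds on the Stokes lines $\arg\left(s\sqrt{\pi V}\right) = \pm\tfrac{\pi}{4}$ --- which is precisely the work the paper's contour rotation to $e^{\pm\pi i/4}\IR^+$ is doing. (Minor: the effective Gaussian window is $|x| = O(1)$ since $\left|e^{-\pi Vx^2}\right| = e^{-\pi \mathrm{Re}(V)x^2}$ with $\mathrm{Re}(V)\geq 1$, so it contains $O(1)$ poles, not $O\left(\sqrt{V}\right)$ many; the $\log k$ comes only from the off-pole decay summed over $r$.) Until the uniform-in-$V$ PV bound is supplied, the proof is incomplete.
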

\begin{proof}
	We start by combining the integral over negative reals and positive reals to obtain
	\begin{equation*}\begin{split}
	&\mathcal{E}^{\rm e}_{\frac{h'}{k},d} \lp \tfrac{h'}{k} + i V\rp
	= 
	-\frac{2i}{\pi}   e^{2 \pi d V} 
	\sum_{n \in \IZ} (-1)^n  \lp n +\tfrac{1}{2} \rp
	e^{\pi i \left( n + \frac{1}{2} \right)^2\frac{h'}{k}}  
	\lim_{\e \to 0^+}
	\int\displaylimits_{\sqrt{2d}}^\infty 
	\frac{e^{-\pi V x^2}}{x^2-\lp n+ \frac{1}{2} \rp^2 (1+i\e)^2} \ddd x
	\\
	&=-\frac{i}{\pi} 
	\sum_{n \in \IZ} (-1)^n  \lp n +\tfrac{1}{2} \rp
	e^{\pi i \left( n + \frac{1}{2} \right)^2\frac{h'}{k}}  
	\lim_{\e \to 0^+}
	\int_0^\infty 
	\frac{e^{-\pi V u}}{\sqrt{u+2d} \left(u + 2d-\lp n+ \frac{1}{2} \rp^2 (1+i\e)^2\right)}  \ddd u,
	\end{split}\end{equation*}
	changing variables $u \coloneqq x^2 - 2 d$.
	Now we write
	\begin{align*}
	&\frac{1}{u + 2d-\lp n+ \frac{1}{2} \rp^2 (1+i\e)^2} 
	\\ &\qquad \qquad \qquad
	= 
	\lp \frac{1}{u + 2d-\lp n+ \frac{1}{2} \rp^2 (1+i\e)^2} 
	+ \frac{1}{ \lp n+ \frac{1}{2} \rp^2 (1+i\e)^2} \rp
	- \frac{1}{ \lp n+ \frac{1}{2} \rp^2 (1+i\e)^2}
	\\ &\qquad \qquad \qquad
	= \frac{u+2d}{\lp n+ \frac{1}{2} \rp^2 (1+i\e)^2 \left(u + 2d-\lp n+ \frac{1}{2} \rp^2 (1+i\e)^2\right)} 
	- \frac{1}{ \lp n+ \frac{1}{2} \rp^2 (1+i\e)^2},
	\end{align*}
	and consider the contribution of each term, which we denote by $\mathcal{E}^{\rm e}_{\frac{h'}{k},d,1} (\frac{h'}{k} + i V)$ and $\mathcal{E}^{\rm e}_{\frac{h'}{k},d,2}(\frac{h'}{k} + i V)$, separately. We start with $\mathcal{E}^{\rm e}_{\frac{h'}{k},d,1} (\frac{h'}{k} + i V)$ and write
	\begin{equation*}
	\mathcal{E}^{\rm e}_{\frac{h'}{k},d,1} \lp \tfrac{h'}{k} + i V\rp
	=
	-\frac{i}{\pi} 
	\sum_{n \in \IZ} 
	\frac{(-1)^n  e^{\pi i \left( n + \frac{1}{2} \right)^2\frac{h'}{k}} }{n +\frac{1}{2}} 
	\lim_{\e \to 0^+}
	\int_0^\infty 
	\frac{e^{-\pi V u}    \sqrt{u+2d}}{u + 2d-\lp n+ \frac{1}{2} \rp^2 (1+i\e)^2}   \ddd u.
	\end{equation*}
	Note that because $\mathrm{Re} (V) \geq 1$ we have either $\mathrm{Re} ( V e^{\frac{\pi i}{4}}) \geq \frac{1}{\sqrt{2}}$ or $\mathrm{Re} ( V e^{-\frac{\pi i}{4}} ) \geq \frac{1}{\sqrt{2}}$. 
	Using Cauchy's Theorem we rotate the path of integration to $e^{\frac{\pi i}{4}} \IR^+$ if $\mathrm{Re} (V e^{\frac{\pi i}{4}}) \geq \frac{1}{\sqrt{2}}$ or to $e^{-\frac{\pi i}{4}} \IR^+$ if $\mathrm{Re} (Ve^{-\frac{\pi i}{4}}) \geq \frac{1}{\sqrt{2}}$, picking up residues from poles (which lie just above the real line) in the former case.
	
	The contribution from the poles at $-2 d + ( n+ \frac{1}{2} )^2 (1+i\e)^2$ sums to 
	\begin{equation*}
	2 \sum_{n \in \IZ} 
	(-1)^n 
	\sgn\lp n + \tfrac{1}{2} \rp e^{\pi i \left( n + \tfrac{1}{2} \right)^2\frac{h'}{k}- \pi  \lp \left(n+ \frac{1}{2}\right)^2 - 2d  \rp V}.
	\end{equation*}
	Since $d < \frac{1}{8}$ and $\mathrm{Re}(V) \geq 1$ we can bound the absolute value of this contribution against
	\begin{equation*}
	2 \sum_{n \in \IZ} e^{- \pi \lp \left(n+ \frac{1}{2}\right)^2 - 2d  \rp} ,
	\end{equation*}
	which is a constant independent of $k, h',V$ and thus the poles at most contribute $O(1)$.
	
	We next bound the remaining part and distinguish whether $\mathrm{Re} ( V  e^{\frac{\pi i}{4}} ) \geq \frac{1}{\sqrt{2}}$ or $\mathrm{Re} ( V  e^{-\frac{\pi i}{4}} ) \geq \frac{1}{\sqrt{2}}$. We first assume that $\mathrm{Re} ( V  e^{\frac{\pi i}{4}} ) \geq \frac{1}{\sqrt{2}}$ in which case we rotate the path of integration to $e^{\frac{\pi i}{4}} \IR^+$.
	Now, the poles are away from the path of integration and hence the integral is holomorphic in $\e$ around zero, so we can set $\e = 0$ to take the limit (also changing variables $u \mapsto e^{\frac{\pi i}{4}} u$) to obtain
	\begin{equation*}
	\mathcal{I}_{\frac{h'}{k},d} \lp \tfrac{h'}{k} + i V\rp
	\coloneqq
	-\frac{i}{\pi}  e^{\frac{\pi i}{4}}
	\sum_{n \in \IZ} 
	\frac{(-1)^n  e^{\pi i \left( n + \frac{1}{2} \right)^2\frac{h'}{k}} }{n +\frac{1}{2}} 
	\int_0^\infty
	\frac{e^{-\pi V \frac{1+i}{\sqrt{2}} u}    \sqrt{u\frac{1+i}{\sqrt{2}}+2d}}{u \frac{1+i}{\sqrt{2}} + 2d-\lp n+ \tfrac{1}{2} \rp^2}   \ddd u.
	\end{equation*}
	Using the bounds
	\begin{equation*}
	\left| \displaystyle e^{-\pi V \frac{1+i}{\sqrt{2}} u} \right| \leq
	e^{-\frac{\pi u}{\sqrt{2}}} \qquad \mbox{and} \qquad
	\frac{1}{\left| u \frac{1+i}{\sqrt{2}} + 2d-\lp n+ \tfrac{1}{2} \rp^2 \right|} \leq 
	\frac{\sqrt{2}}{\lp n+ \tfrac{1}{2} \rp^2 -2d},
	\end{equation*}
	yields
	\begin{equation*}
	\left| \mathcal{I}_{\frac{h'}{k},d} \lp \tfrac{h'}{k} + i V\rp \right| \leq \frac{\sqrt{2}}{\pi}
	\sum_{n \in \IZ} \frac{1}{\left| n + \frac{1}{2} \right|\left(\lp n+ \frac{1}{2} \rp^2 -2d\right)}
	\int_0^\infty  e^{-\frac{\pi u}{\sqrt{2}}} \left|  u\tfrac{1+i}{\sqrt{2}}+2d  \right|^{\frac{1}{2}} \ddd u \ll 1,
	\end{equation*}
	with the upper bound independent of $k,h',V$.
	
	If $\mathrm{Re} ( V  e^{-\frac{\pi i}{4}} ) \geq \frac{1}{\sqrt{2}}$, exactly the same argument with the path of integration rotated to $e^{-\frac{\pi i}{4}} \IR^+$ shows all in all we have
	\begin{equation*}
	\mathcal{E}^{\rm e}_{\frac{h'}{k},d,1} \lp \tfrac{h'}{k} + i V\rp = O (1),
	\end{equation*}
	with the upper bound independent of $k,h',V$.

	Next, we investigate $\mathcal{E}^{\rm e}_{\frac{h'}{k},d,2} (\frac{h'}{k} + i V)$, for which we take $\varepsilon \to 0^+$ to obtain
	\begin{equation*}
	\mathcal{E}^{\rm e}_{\frac{h'}{k},d,2} \lp \tfrac{h'}{k} + i V\rp = 
	\frac{i}{\pi} 
	\sum_{n \in \IZ}
	\frac{ (-1)^n  e^{\pi i \left( n + \frac{1}{2} \right)^2\frac{h'}{k}}}{n  + \frac{1}{2} }  
	\int_0^\infty 
	\frac{e^{-\pi V u}}{\sqrt{u+2d}}  \ddd u.
	\end{equation*}
	We estimate
	\begin{equation*}
	\left| \int_0^\infty 
	\frac{e^{-\pi V u}}{\sqrt{u+2d}}  \ddd u \right| \leq
	\int_0^\infty 
	\frac{e^{-\pi u}}{\sqrt{u+2d}}  \ddd u  \ll 1.
	\end{equation*}
	So we only need to bound the series in $n$. We start by writing
	\begin{equation*}
	\mathcal{I}_{h',k} \coloneqq \sum_{n \in \IZ}
	\frac{ (-1)^n  e^{\pi i \left( n + \frac{1}{2} \right)^2\frac{h'}{k}}}{n  + \frac{1}{2} } 
	=
	\lim_{N \to \infty} \sum_{m=-N}^N \ \sum_{r \pmod{2k}}
	\frac{(-1)^r  e^{\pi i \left( r + \frac{1}{2} \right)^2\frac{h'}{k}}}{2km + r + \frac{1}{2}},
	\end{equation*}
	where we note that for $n = 2km + r$ with $m, r \in \IZ$ we have
	\begin{equation*}
	(-1)^n e^{\pi i \lp n + \frac{1}{2} \rp^2\frac{h'}{k}} 
	=  (-1)^r  e^{\pi i \lp r + \frac{1}{2} \rp^2\frac{h'}{k}}.
	\end{equation*}
	Moving the finite sum outside (which we can because the resulting series in $m$ converges) gives
	\begin{equation*}
	\mathcal{I}_{h',k} = \frac{1}{2k} \sum_{r \pmod{2k}} 
	(-1)^r  e^{\pi i \left( r + \frac{1}{2} \right)^2\frac{h'}{k}}
	\lim_{N \to \infty} \sum_{m=-N}^N \frac{1}{m + \frac{r + \frac{1}{2}}{2k}}.
	\end{equation*}
	Now, we use the identity
	\begin{equation}\label{eq:cot_identity}
	\pi \cot (\pi x) = \lim_{N\to \infty} \sum_{n=-N}^N \frac{1}{x+n},
	\end{equation}
	to get
	\begin{equation*}
	\mathcal{I}_{h',k} = \frac{\pi}{2k} \sum_{r \pmod{2k}} 
	(-1)^r  e^{\pi i \left( r + \frac{1}{2} \right)^2\frac{h'}{k}}
	\cot \lp \tfrac{\pi}{2k} \lp r + \tfrac{1}{2} \rp  \rp.
	\end{equation*}
	Noting that for $0<x<1$ we have $| \cot (\pi x) | \ll \frac{1}{x} + \frac{1}{1-x}$,
	we obtain
	\begin{equation*}
	\mathcal{I}_{h',k} \ll \frac{1}{k} \sum_{r =0}^{2k-1} 
	\frac{2k}{r+\frac{1}{2}} = O \left(\log (k)\right).
	\end{equation*}
	This finishes the proof.
\end{proof}
\begin{rem}
Lemma \ref{lemInd} with $d=0$ implies that the obstruction to modularity term is bounded as
\begin{equation*}
 \mathcal{E}_{\frac{h'}{k} } \lp \tfrac{h'}{k} + i V\rp   =  O \left(\log (k)\right),
\end{equation*}
for $h',k,V$ satisfying the conditions of the lemma and with the bound independent of $h'$ and $V$.
\end{rem}
\begin{rem}
Since in \eqref{4.14} we have $d < \frac{1}{8}$, the poles are away from the path of integration and we can set $\e = 0$. Moreover, replacing $n = 2km + r$ with $m\in \IZ$ and $r$ running $\!\!\pmod{2k}$ while taking the sum over $n$ in a symmetric fashion, we have
\begin{equation*}
\mathcal{E}^*_{\frac{h'}{k},d} \lp \tfrac{h'}{k} + i V\rp
= 
-\frac{i}{\pi}   e^{2 \pi d V} 
\sum_{r \pmod{2k}} 
(-1)^r  e^{\pi i \left( r + \frac{1}{2} \right)^2\frac{h'}{k}}
\lim_{N \to \infty} \sum_{m=-N}^N 
\int\displaylimits_{-\sqrt{2d}}^{\sqrt{2d}} \frac{e^{-\pi V x^2}}{x-\lp 2k m  + r + \frac{1}{2} \rp } \ddd x .
\end{equation*}
We can switch the order of the integral and the sum over $m$ and use \eqref{eq:cot_identity} (note that the convergence is uniform in our finite range) to obtain
\begin{equation}\label{eq:epsilon_remaining}
\mathcal{E}^*_{\frac{h'}{k},d} \lp \tfrac{h'}{k} + i V\rp
= 
-\frac{i    e^{2 \pi d V} }{2k} 
\sum_{r \pmod{2k}} 
(-1)^r  e^{\pi i \left( r + \frac{1}{2} \right)^2\frac{h'}{k}}
\int\displaylimits_{-\sqrt{2d}}^{\sqrt{2d}}  
\cot \lp \tfrac{\pi}{2k} \lp x - r - \tfrac{1}{2} \rp \rp   e^{-\pi V x^2}
\ddd x .
\end{equation}
\end{rem}

\subsection{Applying the Circle Method}\label{subsCircle}
In this section we finish the proof of Theorem \ref{asTh} by using the Circle Method. Write
\begin{equation*}
f (\tau) = q^{\frac{1}{24}} \sum_{n\geq 0} \a_f (n) q^n = q^{\frac{1}{24}}(1+O(q))
\andd
g (\tau) = q^{-\frac{1}{12}} \sum_{n\geq 0} \a_g (n) q^n = q^{-\frac{1}{12}}(1+O(q)).
\end{equation*}
Then, by \eqref{splitU}, we have
\begin{equation*} 
u(n)=\a_g(n)-\a_f(n).
\end{equation*}
Applying the Circle Method to $g$ is standard and leads to the first term in Theorem \ref{asTh}.

To find an exact formula for $\a_f (n)$ we start by writing
\begin{equation*}
\a_f (n) = \int_{i}^{i+1}  f(\tau) e^{-2\pi i \left(n+ \frac{1}{24}\right)\tau } \ddd \tau.
\end{equation*}
where the integral goes along any path connecting $i$ and $i+1$. We decompose the integral into arcs lying near roots of unity $\zeta_k^h$, where $0\leq h\leq k\leq N$ with $\mathrm{gcd}(h,k)=1$, and $N\in\mathbb N$ is a parameter, which then tends to infinity. For this, the {\it Ford Circle} $\mathcal C_{h,k}$ denotes the circle in the complex $\tau$-plane with radius
$\frac{1}{2k^2}$ and center $\frac{h}{k}+\frac{i}{2k^2}$.
We let $P_N := \bigcup_{\frac{h}{k} \in F_N} C_{h,k}(N)$, where $F_N$ is the Farey sequence of order $N$ and $C_{h,k}(N)$ is the upper arc of the Ford Circle $\mathcal C_{h,k}$ from its intersection with $\mathcal C_{h_1,k_1}$ to its intersection with $\mathcal C_{h_2,k_2}$, where $\frac{h_1}{k_1} < \frac{h}{k} < \frac{h_2}{k_2}$ are consecutive fractions in $F_N$. In particular $C_{0,1}(N)$ and $C_{1,1}(N)$ are half-arcs with the former starting at $i$ and the latter ending at $i+1$. 
\begin{figure}[h!]
	\vspace{-10pt}
	\centering
	\includegraphics[scale=0.32]{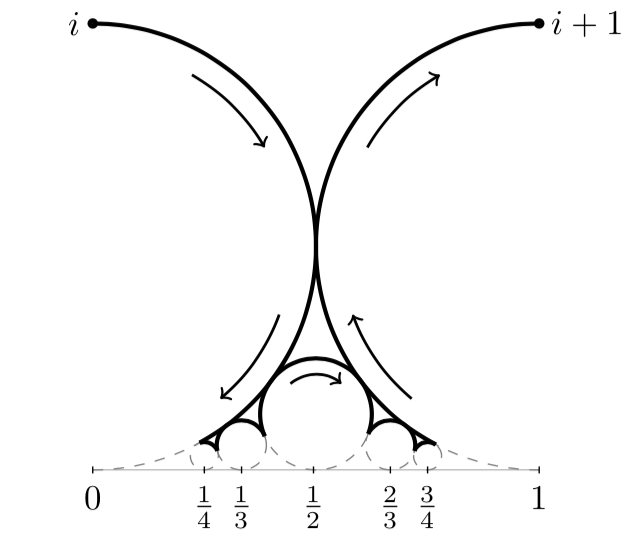}
	\vspace{-10pt}
	\caption{Rademacher's integration path $P_N$ for $N=4$. }
	\label{fig02:Rademacher_path}
\end{figure}

Therefore,
\begin{equation*}
\a_f (n) = \sum_{k=1}^N \sum_{\substack{0 \leq h \leq k \\ \mathrm{gcd} (h,k) = 1}} \ 
\int\limits_{C_{h,k}(N)}  f(\tau) e^{-2\pi i \left(n+ \frac{1}{24}\right)\tau } \ddd \tau.
\end{equation*}
Next, we make the change of variables $\tau=\frac{h}{k}+\frac{iZ}{k^2}$, which maps the Ford Circles to a standard circle with radius $\frac12$ which is centered at $Z=\frac{1}{2}$ (see Figure \ref{fig03:z_Rademacher_path}). 
The image of the arc $C_{h,k}(N)$ is now an arc on the standard circle from $Z_1$ to $Z_2$, where
\begin{equation*}
Z_1 = Z_1 (h,k; N) \coloneqq \frac{k}{k - i k_1} 
\andd
Z_2 = Z_2 (h,k; N) \coloneqq \frac{k}{k + i k_2}. 
\end{equation*}
We also combine the half-arcs $C_{0,1}(N)$ and $C_{1,1}(N)$ into an arc in the $z$-plane from $z_1 (0,1;N) \coloneqq \frac{1}{1-iN}$ to $z_2 (0,1;N) \coloneqq \frac{1}{1+iN}$ by shifting the $C_{1,1}(N)$ half-arc as $\t \mapsto \t-1$. Note that on the disc bounded by the standard circle we always have $\text{Re}(\frac1Z)\geq 1$. Moreover, for any point $z$ on the chord that is connecting $z_1$ and $z_2$, we have $|Z| \leq \frac{ \sqrt{2}k}{N}$ and the length of this chord does not exceed $\frac{2 \sqrt{2}k}{N}$.
\begin{figure}[h!]
	\vspace{-10pt}
	\centering
	\includegraphics[scale=0.22]{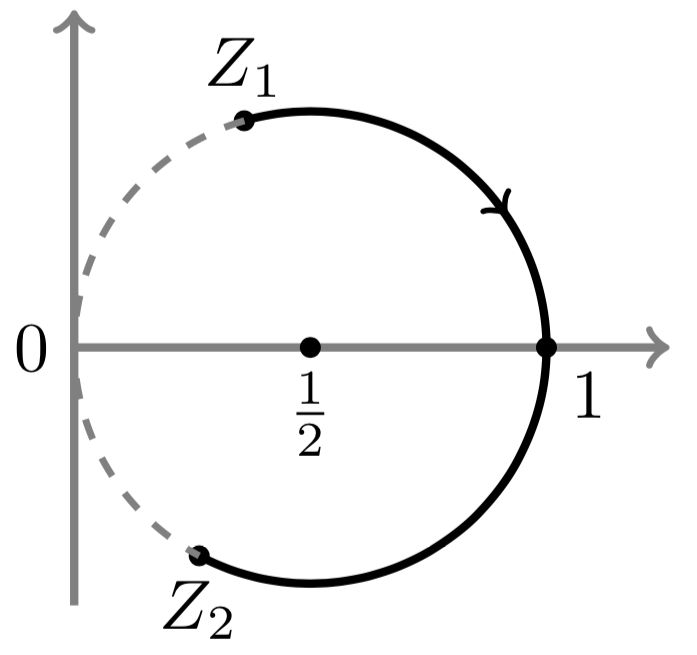}
	\vspace{-10pt}
	\caption{ Integration path on the standard circle. }
	\label{fig03:z_Rademacher_path}
\end{figure}

This yields
\begin{equation*}
\a_f (n) = i \sum_{k=1}^N k^{-2} \sum_{\substack{0 \leq h < k \\ \mathrm{gcd}(h,k) = 1}} \ \int\limits_{Z_1}^{Z_2} 
f \lp \tfrac{h}{k} + \tfrac{iZ}{k^2} \rp  e^{-2\pi i \left(n+\frac{1}{24}\right)\left(\frac{h}{k} + \frac{iZ}{k^2}\right)}  \ddd Z .
\end{equation*}
Since as $N \to \infty$ the end-points of the integration path get closer to the origin (in the $\tau$-plane, the integration path gets closer to the rationals numbers), we use the modular transformation $M_{h,k}$
and have better control on the integrand's behavior near rational points. 
Lemma \ref{4.1} implies
\begin{equation*}
f\left(\tfrac{h}{k}+\tfrac{iZ}{k^2}\right)= e^{\frac{\pi i}{4}} \nu_\eta(M_{h,k})^{-1} \sqrt{\tfrac{Z}{k}} \left(f\left(\tfrac{h'}{k}+\tfrac{i}{Z}\right)-\tfrac12 g\left(\tfrac{h'}{k}+\tfrac{i}{Z}\right) \mathcal E_{\tfrac{h'}{k}}\left(\tfrac{h'}{k}+\tfrac{i}{Z}\right)\right).
\end{equation*}
We then approximate the term in the parentheses by its principal part 
\begin{equation*}
-\frac{1}{2} \zeta^{-h'}_{12k}  
\mathcal{E}^*_{\frac{h'}{k},\frac{1}{12}} \lp \tfrac{h'}{k} + \tfrac{i}{Z} \rp.
\end{equation*}
The error to $\a_f (n)$ that arises from this approximation can be estimated by deforming the integral for the remainder term to the chord connecting $Z_1$ to $Z_2$ using Cauchy's Theorem. Then the bound in Lemma \ref{lemInd} with $d=0$ and $d=\frac{1}{12}$, together with the fact that $g(\t)  - q^{-\frac{1}{12}}$ and $f(\t)$ are $ O(1)$ for $\t_2 \geq 1$ uniformly in $\t$, yields
\begin{align*}
\alpha_f(n)=-\frac{ i }{2} \sum_{k=1}^N k^{-\frac{5}{2}} 
\sum_{\substack{0 \leq h < k \\ \mathrm{gcd}(h,k) = 1}}
e^{\frac{\pi i}{4}} \nu_\eta (M_{h,k})^{-1}  
\zeta_{24k}^{-(24n+1)h-2h'}
\int\limits_{Z_1}^{Z_2}  Z^{\frac12}  
\mathcal{E}^*_{\frac{h'}{k},\frac{1}{12}} &\lp \tfrac{h'}{k} + \tfrac{i}{Z} \rp 
e^{2\pi \left(n+\frac{1}{24}\right) \frac{Z}{k^2}}    \ddd Z \\
&\quad+O\left(N^{-\frac12} \log(N) \right).
\end{align*} 
We next separate the integral as 
\begin{equation*}
\int\limits_{Z_1}^{Z_2} = \int_\CC - \int_0^{Z_1} - \int_{Z_2}^0,
\end{equation*}
where the integration over the whole standard circle $\CC$ is oriented clockwise and we assume that both of the remaining two integrals are also over arcs on the standard circle. Over the standard circle we have $\mathrm{Re}(\frac{1}{Z}) = 1$ and hence $\mathcal{E}^*_{\frac{h'}{k},\frac{1}{12}} (\frac{h'}{k} + \frac{i}{Z})$ is also bounded by $\log (k)$.
Since the arc lengths and the value of $|Z|$ on the arcs from $0$ to $Z_{1}$ or $Z_2$ are also bounded by $\frac{k}{N}$, the same kind of bound used for the error terms hold and the contribution to $\a_f (n)$  from such integrals are bounded by $N^{-\frac12} \log(N)$ as well. Plugging in the expression for $\CE^*$ in equation \eqref{eq:epsilon_remaining} we then obtain
\begin{align} 
&\a_f (n) = -\frac{ 1 }{4} \sum_{k=1}^N k^{-\frac{7}{2}} 
\sum_{\substack{0 \leq h < k \\ \mathrm{gcd}(h,k) = 1}}
e^{\frac{\pi i}{4}} \nu_\eta (M_{h,k})^{-1}  
\zeta_{24k}^{-(24n+1)h-2h'}
\sum_{r \pmod{2k}} (-1)^r e^{\pi i \left(r+\frac{1}{2}\right)^2 \frac{h'}{k} }  \nonumber \\
&
\qquad\times
\int_{-\frac{1}{\sqrt{6}}}^{\frac{1}{\sqrt{6}}}  
\cot \lp \tfrac{\pi}{2k} \lp x-r-\tfrac{1}{2} \rp \rp 
\int_{\mathcal{C}}  Z^{\frac{1}{2}}   e^{2\pi  \left(n+\frac{1}{24}\right)\frac{Z}{k^2} + 2 \pi \left(\frac{1}{12} - \frac{x^2}{2} \right) \frac{1}{Z}}  \ddd Z   \ddd x + O \left(N^{-\frac12} \log(N) \right) .\label{formulaa}
\end{align}

\noindent We  perform the integrals in $Z$ using that for $\delta_1,\delta_2\in\IR^+$,
\begin{equation*}
\int_{\mathcal{C}}  Z^{\frac{1}{2}} e^{\frac{2 \pi \d_1}{Z} + 2 \pi \d_2 Z}\ddd Z 
= 
- 2 \pi i \lp \tfrac{\d_1}{\d_2} \rp^{\frac{3}{4}}
I_{\frac{3}{2}} \lp 4\pi \sqrt{\d_1 \d_2}  \rp.
\end{equation*}
Taking $N \to \infty$ in \eqref{formulaa} gives 
\begin{align}
\a_f (n) &= 
\frac{\pi i}{2^{\frac14}(24n+1)^{\frac34}}
\sum_{k\geq 1} k^{-2}
\!\!\!\sum_{\substack{0 \leq h < k \\ \mathrm{gcd}(h,k) = 1}} \!\!\! 
e^{\frac{\pi i}{4}} \nu_\eta (M_{h,k})^{-1}   \zeta_{24k}^{-(24n+1)h-2h'}
\!\!\!\sum_{r \pmod{2k}}  \!\!\!(-1)^r  e^{\pi i \left( r + \frac{1}{2} \right)^2\frac{h'}{k}}  \notag \\
&\qquad \times
\int_{-\frac{1}{\sqrt{6}}}^{\frac{1}{\sqrt{6}}}   \left(1 - 6 x^2\right)^{\frac{3}{4}}
\cot \lp \tfrac{\pi}{2k} \lp x-r-\tfrac{1}{2} \rp \rp
I_{\frac{3}{2}} \lp \tfrac{2\pi}{\sqrt{3}k} \sqrt{\left(1-6x^2\right) \lp n + \tfrac{1}{24} \rp}  \rp 
\ddd x .
\label{eq:f_final_result}
\end{align}
Plugging in the definition of $K_k(n,r)$ and changing variables then gives the claim.

\subsection{Proof of Corollary \ref{asCor}}
We are now ready to prove Corollary \ref{asCor}.
\begin{proof}[Proof of  Corollary \ref{asCor}]
	We use the following asymptotic behavior of the Bessel function as $x\to\infty$
\begin{equation*}
I_\ell(x) = \frac{e^x}{\sqrt{2\pi x}} \lp 1 +O \lp x^{-1} \rp \rp .
\end{equation*}
The leading exponential term for $\alpha_g (n)$  then comes from the $k=1$ contribution
\begin{equation*}
\frac{2\pi}{12n-1} K_1(n) I_2\left(\frac{\pi}{3}\sqrt{12n-1}\right).
\end{equation*}
Using that $K_1(n)=1$ we obtain 
\begin{equation*}
\a_g(n)=\frac{1}{4\cdot3^{\frac 3 4}  n^{\frac54}} e^{2\pi\sqrt{\frac n3}} \lp 1 +O \lp n^{-\frac{1}{2}} \rp \rp.
\end{equation*}
Similarly the leading exponential term for $\a_f(n)$ comes from the $k=1$ contribution. Using that $K_1(n, r)=(-1)^{r+1}$, we write that term as
\begin{equation*}
\frac{2^{\frac14}\pi}{3^{\frac12} (24n+1)^{\frac34}} 
 \int_{-1}^1\left(1-x^2\right)^{\frac34} \cot\left(\tfrac{\pi}{2}\left(\tfrac{x}{\sqrt{6}}+\tfrac12\right)\right) I_{\frac32}\left(\tfrac{2\pi}{\sqrt{3}}\sqrt{\left(1-x^2\right)\left(n+\tfrac{1}{24}\right)}\right) \ddd x.
\end{equation*} 
Using the saddle point method for the integral then yields
\begin{equation*}
\alpha_f(n) = \frac{1}{8\cdot3^{\frac 34}  n^{\frac54}}   e^{2\pi\sqrt{\frac n3}} \lp 1 +O \lp n^{-\frac{1}{2}} \rp \rp .
\end{equation*}
From this we conclude the claim.
\end{proof}

\subsection{Numerical Results}
Here we display some numerical results and compare $\a_f(n)$ for a number of cases to the result obtained by using equation \eqref{eq:f_final_result} and numerically performing the sum over $k$ from $1$ to $N$. 
\begin{table}[h]
\centering
\begin{tabular}{c | c | c | c | c | c}
  & $N=1$ & $N=2$  &   $N=3$ & $N=4$ & $N=20$ \\
  \hline
$\a_f (0) = 1$ &
 $0.536184\ldots$   &  $0.660506\ldots$   &  $0.756302\ldots$   &
$0.799454\ldots$    & $0.954218\ldots$  \\
$\a_f (7) = 63$ &
 $63.60062\ldots$   &  $63.00845\ldots$   &  $62.90648\ldots$   &
$62.90906\ldots$    & $62.96324\ldots$  \\
$\a_f (9) = 170$ &
 $170.6548\ldots$   &  $169.7915\ldots$   &  $170.0395\ldots$   &
$170.0367\ldots$    & $170.0011\ldots$  \\
$\a_f (10) = 272$ &
 $271.1167\ldots$   &  $272.1510\ldots$   &  $272.0148\ldots$   &
$271.9349\ldots$    & $272.0002\ldots$  \\
$\a_f (15) = 2191$ &
 $2192.974\ldots$   &  $2190.577\ldots$   &  $2191.006\ldots$   &
$2191.010\ldots$    & $2191.033\ldots$  \\
$\a_f (19) = 9592$ &
 $9596.754\ldots$   &  $9592.326\ldots$   &  $9592.026\ldots$   &
$9592.030\ldots$    & $9592.001\ldots$  \\
$\a_f (20) = 13602$ &
 $13596.99\ldots$   &  $13602.12\ldots$   &  $13601.79\ldots$   &
$13601.92\ldots$    & $13601.99\ldots$  \\
\hline 
\bottomrule
\end{tabular}
\caption{Numerical results for Fourier coefficients of $f(\tau) = - \frac{i}{2}  \frac{\psi(\tau)}{\eta(\tau)^2}$.}
\label{tab:a30_5}
\end{table}

\section{Proof of Theorem \ref{QuantumTheorem} and Corollary \ref{quantum}}\label{sec:Quantum}
In this section we study the family of false theta functions $F_{j,N}$ defined in \eqref{Fjp}\footnote{Note that in \cite{BM} (using $p$ instead of $N$) the function $F_{j,N}$ is defined as $\sum_{n \in \IZ} \sgn (n+\frac12)  q^{( n + \frac{j}{2N} )^2}$ and it corresponds to $F_{j,N} ( \frac{\tau}{N} )$ in our notation.} and in particular prove Theorem \ref{QuantumTheorem} and Corollary \ref{quantum}. Define the modular completion
\begin{align*}
\wh{F}_{j,N} (\tau,w) &\coloneqq \sum_{\substack{n\in\IZ\\n\equiv j\pmod{2N}}} \erf \lp - i \sqrt{\pi i (w-\t)}  \tfrac{n}{\sqrt{2N}} \rp q^{\frac{n^2}{4N}}.
\end{align*}
Observe that for $\varepsilon>0$ we have
\begin{equation}\label{eq:Fhat_limit}
\lim_{t\to\infty}  \wh{F}_{j,N} (\tau,\tau+it\pm\varepsilon) =\pm 
F_{j,N} (\tau).
\end{equation}
Theorem \ref{ThPh} immediately implies the following modular transformation properties.
\begin{lem}\label{cor:QuantumThm}
	The function $\wh{F}_{j,N}(\t,w)$ transforms as
	\begin{align*}
	\wh{F}_{j,N}(\tau+1,w+1) &= e^{\frac{2 \pi ij^2}{4N}} \wh{F}_{j,N}(\tau,w), \\
	\wh{F}_{j,N} \lp -\tfrac{1}{\tau},-\tfrac{1}{w} \rp 
	&= \chi_{\t,w} (-i)^{\frac{3}{2}} \tau^{\tfrac{1}{2}}  
	\sqrt{\frac{2}{N}}   \sum_{r=1}^{N-1} \sin \lp \tfrac{\pi jr}{N} \rp  
	\wh{F}_{r,N}(\tau,w) .
	\end{align*}
\end{lem}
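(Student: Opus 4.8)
The plan is to realize $\wh F_{j,N}$ as a specialization of the general completed false theta function $\wh\Psi_{Q,\boldsymbol\mu}$ from Theorem~\ref{ThPh} and then simply translate the two transformation laws of that theorem into the present notation. Concretely, I would take the rank-one lattice $L = \IZ$ with $Q(n) = \frac{n^2}{4N}$, so that $B(m,n) = \frac{mn}{2N}$ and the associated matrix is $A = \frac{1}{2N}$; then $|\det A|^{1/2} = (2N)^{-1/2}$ and one checks $|L^*/L| = 2N$ with $L^* = \IZ$ under this form. A characteristic vector is $\boldsymbol\ell = 0$ (since $Q(n) = \frac{n^2}{4N} \in \IZ$ need not hold, one actually wants $\boldsymbol\ell$ with $Q(n) + \frac12 B(\boldsymbol\ell,n) \in \IZ$; here it is cleaner to absorb the shift differently — see below), and the vector $\boldsymbol c$ with $2Q(\boldsymbol c) = 1$ is $\boldsymbol c = \sqrt{N}$, so that $B(\boldsymbol c, n) = \frac{n}{2\sqrt{N}}$; matching the argument $-i\sqrt{\pi i(w-\tau)}\, B(\boldsymbol c, n)$ against $-i\sqrt{\pi i(w-\tau)}\,\tfrac{n}{\sqrt{2N}}$ forces instead $\boldsymbol c = \tfrac{1}{\sqrt{2}}\sqrt{2N} = \sqrt{N}$ with the convention $B(\boldsymbol c,n) = \tfrac{n}{\sqrt{2N}}\cdot\tfrac{1}{\sqrt 2}$... so the first real task is to pin down the dictionary so that $\wh F_{j,N}(\tau,w) = \wh\Psi_{Q,\boldsymbol\mu}(0;\tau,w)$ with $\boldsymbol\mu$ the residue class $\tfrac{j}{2N} \in L^*/L$ (note $\boldsymbol n \in \boldsymbol\mu + L$ translates to $n \equiv j \pmod{2N}$ after rescaling). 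I expect $\boldsymbol\ell = 0$ works once $B$ is normalized correctly, killing the $(-1)^{2Q(\boldsymbol\ell)}$ and $e^{-\pi i Q(\boldsymbol\ell)}$ factors and simplifying $e^{2\pi i Q(\boldsymbol\mu + \boldsymbol\ell/2)}$ to $e^{2\pi i Q(\boldsymbol\mu)} = e^{\pi i j^2/(2N)}$, which matches the stated $T$-transformation $e^{2\pi i j^2/(4N)}$ after the rescaling of $Q$.

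Granting the dictionary, the $T$-transformation in Lemma~\ref{cor:QuantumThm} is immediate from part~(2) of Theorem~\ref{ThPh} at $\boldsymbol z = 0$. For the $S$-transformation I would plug $\boldsymbol z = 0$ into the second formula of Theorem~\ref{ThPh}(2), giving
\[
\wh F_{j,N}\!\left(-\tfrac1\tau,-\tfrac1w\right) = \chi_{\tau,w}\,\frac{(-i\tau)^{1/2}}{\sqrt{2N}}\sum_{\nu \bmod 2N} e^{-2\pi i B(\mu,\nu)}\,\wh F_{\nu,N}(\tau,w),
\]
and then evaluate the finite exponential sum over residues $\nu \pmod{2N}$. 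The key computation is the standard Gauss-sum-style manipulation: the inner sum $\sum_{\nu} e^{-2\pi i \mu\nu/(2N)}(\cdots)$ collapses, after using $\wh F_{-\nu,N} = -\wh F_{\nu,N}$ (which follows because $\erf$ is odd and the summand is odd under $n \mapsto -n$, exactly as noted for $F_{j,N}$), to a sine sum: pairing $\nu$ with $-\nu$ produces $e^{-2\pi i j\nu/(2N)} - e^{2\pi i j\nu/(2N)} = -2i\sin(\pi j\nu/N)$, so only $\nu = 1,\dots,N-1$ survive and the prefactor $\frac{1}{\sqrt{2N}}$ combines with the $-2i$ and a factor of $(-i)^{1/2}\cdot(-i)$ bookkeeping to yield $(-i)^{3/2}\sqrt{2/N}\sum_{r=1}^{N-1}\sin(\pi jr/N)$. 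I would be careful to track the precise power of $-i$: Theorem~\ref{ThPh} gives $(-i\tau)^{N/2} = (-i\tau)^{1/2}$ here, and combining $(-i)^{1/2}$ with the $i$ from the sine identity and possibly a sign from reindexing is where the stated $(-i)^{3/2}$ comes from.

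The holomorphicity claims and the $w$-branch-cut statement are inherited verbatim from Theorem~\ref{ThPh}, so nothing new is needed there. The main obstacle — the only genuinely delicate point — is getting the normalization of the bilinear form $B$ and the identification of the residue-class data $(\boldsymbol\mu, \boldsymbol\ell, \boldsymbol c)$ exactly right, since an error there will misplace factors of $2$, $\sqrt{2N}$, or a root of unity; everything downstream (the $T$-eigenvalue, the $\sqrt{2/N}$, and the sine sum) is then forced. A clean way to avoid the bookkeeping is to prove the $S$-transformation directly by the same Poisson-summation argument used for Theorem~\ref{thm:modularity_thm} applied to this one-dimensional lattice, invoking Lemma~\ref{lem:error_self_dual} for the self-duality of $F_{\tau,w}$ and then using the classical evaluation $\sum_{\nu \bmod 2N} e^{\pi i(\nu^2 - 2j\nu)/(2N)}$-type sine identity; but reducing to Theorem~\ref{ThPh} is shorter provided the dictionary is stated carefully, so I would do that and relegate the normalization check to a short explicit line.
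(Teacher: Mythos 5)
Your proposal is correct and takes the same route as the paper, whose entire proof is the remark that the lemma follows immediately from Theorem \ref{ThPh}. The dictionary you were hunting for is $L=\IZ$ with $Q(x)=Nx^{2}$ (so $B(x,y)=2Nxy$ is integral and $L^{*}/L\cong \IZ/2N\IZ$), $\boldsymbol{\mu}=\tfrac{j}{2N}$, $\boldsymbol{\ell}=0$, $\boldsymbol{c}=\tfrac{1}{\sqrt{2N}}$, writing $n=2Nm+j$ so that $B(\boldsymbol{c},\,m+\tfrac{j}{2N})=\tfrac{n}{\sqrt{2N}}$ and $Q(m+\tfrac{j}{2N})=\tfrac{n^{2}}{4N}$; after that your sine-sum collapse (using $\wh{F}_{-r,N}=-\wh{F}_{r,N}$ and $\wh{F}_{0,N}=\wh{F}_{N,N}=0$) and the bookkeeping $(-i)^{1/2}\cdot(-2i)/\sqrt{2N}=(-i)^{3/2}\sqrt{2/N}$ come out exactly as you describe.
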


Next we formulate a proposition that clarifies the relation between $\widehat{F}_{j,N}$ and $f_{j,N}$ in terms of Eichler-type integrals. The proof follows by a direct calculation.
\begin{prop}\label{fhat}
	For $\t, w \in \mathbb{H}$ and $\varepsilon >0$ we have
	\begin{align}
	\widehat{F}_{j,N}(\tau,w) &= \pm  F_{j,N}(\tau) - \sqrt{2N} \int_{w}^{\tau+i\infty\pm\varepsilon} \frac{f_{j,N}(\mathfrak z)}{\sqrt{i(\mathfrak z- \tau)}}\ddd\mathfrak z = \sqrt{2N} \int_{\t}^{w} \frac{f_{j,N}(\mathfrak z)}{\sqrt{i(\mathfrak z- \tau)}}\ddd\mathfrak z  ,
	\label{eq:Fhat_hol_comp}
	\\
	F_{j,N}(\tau) &= 
	-i \sqrt{2N} \int_{\t}^{i\infty} \frac{f_{j,N}(\mathfrak z)}{\sqrt{-i(\zz- \t)}}\ddd\zz.
	\label{eq:F_integral_form}
	\end{align}
\end{prop}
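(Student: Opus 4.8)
The plan is to carry out, at the level of the family $\widehat{F}_{j,N}$, the same computation that produced \eqref{PhP} in the proof of Lemma \ref{4.1}. First I would observe that the series defining $\widehat{F}_{j,N}(\tau,w)$ converges absolutely and locally uniformly on the region where $\sqrt{i(w-\tau)}$ is holomorphic (this is the convergence underlying Lemma \ref{cor:QuantumThm}, and in any case follows from the same kind of bound used in the proof of Theorem \ref{thm:modularity_thm}), so the series may be differentiated termwise in $w$. Using $\erf'(x)=\tfrac{2}{\sqrt\pi}e^{-x^2}$ together with $q^{n^2/(4N)}=e^{\pi i\tau n^2/(2N)}$, a one-line calculation then gives, away from the branch cut,
\begin{equation*}
\frac{\partial}{\partial w}\widehat{F}_{j,N}(\tau,w)=\frac{1}{\sqrt{2N}\,\sqrt{i(w-\tau)}}\sum_{\substack{n\in\IZ\\ n\equiv j\pmod{2N}}}n\,e^{\pi i w n^2/(2N)}=\frac{\sqrt{2N}\,f_{j,N}(w)}{\sqrt{i(w-\tau)}},
\end{equation*}
the last step being the recognition of the defining series of $f_{j,N}$.

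Next I would integrate this identity in $w$. Since $\erf(0)=0$ we have $\widehat{F}_{j,N}(\tau,\tau)=0$; integrating from $\tau$ to $w$ along a path in $\IH$ avoiding the branch cut $\{\tau+it:t>0\}$ — legitimate because the integrand carries only an integrable $(\zz-\tau)^{-1/2}$ singularity at the lower endpoint and $f_{j,N}$ decays exponentially as $\zz\to i\infty$ — yields the rightmost expression in \eqref{eq:Fhat_hol_comp}. Integrating instead from $w$ up to the truncated endpoint $\tau+it\pm\varepsilon$ and letting $t\to\infty$, the boundary term tends to $\pm F_{j,N}(\tau)$ by \eqref{eq:Fhat_limit}, which produces the middle expression in \eqref{eq:Fhat_hol_comp}. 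Finally, for \eqref{eq:F_integral_form} I would take the upper sign in \eqref{eq:Fhat_hol_comp}, send $w\to\tau$ (using again $\widehat{F}_{j,N}(\tau,\tau)=0$) to obtain $F_{j,N}(\tau)=\sqrt{2N}\int_{\tau}^{\tau+i\infty+\varepsilon}f_{j,N}(\zz)/\sqrt{i(\zz-\tau)}\,\ddd\zz$, then note that along the portion of the path lying slightly to the right of the vertical ray one has $\zz-\tau=\varepsilon+it$, so that $i(\zz-\tau)=-t+i\varepsilon$ sits just above the negative reals while $-i(\zz-\tau)=t-i\varepsilon$ stays in the right half-plane; hence $\sqrt{i(\zz-\tau)}=i\sqrt{-i(\zz-\tau)}$ there, and letting $\varepsilon\to0^+$ (harmless, as $\sqrt{-i(\zz-\tau)}$ has no branch cut along the vertical ray) gives \eqref{eq:F_integral_form}.

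Each individual step is short, and conceptually there is no obstacle. The one point requiring genuine care — a bookkeeping matter rather than a difficulty — is tracking the branch of the square root together with the two independent sign choices ($\pm$ in $\pm F_{j,N}$ and $\pm\varepsilon$ in the endpoint) consistently with the branch convention in the definition of $\widehat{F}_{j,N}$. Getting those signs exactly right is precisely what makes the clean identities \eqref{eq:Fhat_hol_comp} and \eqref{eq:F_integral_form} come out, so that is the part I would check most carefully.
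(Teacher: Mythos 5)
Your proof is correct and is exactly the ``direct calculation'' the paper has in mind: differentiating the completion termwise in $w$ to recognize $\sqrt{2N}\,f_{j,N}(w)/\sqrt{i(w-\tau)}$, then integrating from $\tau$ (where $\widehat{F}_{j,N}$ vanishes) or toward $\tau+i\infty\pm\varepsilon$ (using \eqref{eq:Fhat_limit}), mirroring the derivation of \eqref{PhP} in the proof of Lemma \ref{4.1}. Your branch-of-square-root bookkeeping in the final step, giving $\sqrt{i(\zz-\tau)}=i\sqrt{-i(\zz-\tau)}$ on the vertical ray, is also the right justification for \eqref{eq:F_integral_form}.
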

\begin{rem}
The identity \eqref{eq:F_integral_form} is equivalent to writing
\begin{equation*}
F_{j,N}(\tau) = 
\sqrt{2N} \int_{0}^{\infty} \frac{f_{j,N}(\t + i v)}{\sqrt{v}}   \ddd v,
\end{equation*}
which is also easy to directly deduce by a term-by-term integration using
\begin{equation*}
\sgn(n) q^{\frac{n^2}{2}} = n \int_0^\infty \frac{e^{\pi in^2(\tau+iv)}}{\sqrt{v}}dv.
\end{equation*}
This gives us another perspective on how the modular completion of false theta functions works. In the form
\begin{equation*}
F_{j,N}(\tau) = \pm \sqrt{2N} \int_{\t}^{\tau+i\infty\pm\varepsilon} \frac{f_{j,N}(\mathfrak z)}{\sqrt{i(\mathfrak z- \tau)}}\ddd\zz,
\end{equation*}
the reason for modular non-invariance is the fact that the upper-bound in the integral does not transform when $\SL_2(\IZ)$ acts on $\tau$. The completion introduced in this paper works by replacing the upper bound with an arbitrary $w \in \mathbb{H}$ that transforms in the same way as $\tau$ under $\SL_2(\IZ)$.

A related argument is used for indefinite theta functions, which yield mock modular forms. There, the convergence of indefinite theta functions is ensured by inserting combinations of factors such as $\sgn ( B (\b c,\b n) )$ where $\b n$ is a (shifted) lattice element and $\b c$ is a vector satisfying $B(\b c,\b c) < 0$ (or $=0$ in limiting cases).\footnote{These vectors need to satisfy more conditions to ensure convergence but we only focus on the aspect that their norm-squared is non-positive.} In this case, a similar identity involving the sign function is then
\begin{equation*}
\sgn(n) q^{-\frac{n^2}{2}} = n \int_0^\infty \frac{e^{- \pi i n^2 (\tau - iv)}}{\sqrt{v}} \ddd v
= -i\int_{-\t}^{i \infty} \frac{n   e^{\pi i n^2\zz}}{\sqrt{-i (\zz + \t)}} \ddd \zz.
\end{equation*}
Of course, if $\mathrm{Im}(\zz) > 0$, summing $n e^{\pi i n^2\zz }$ gives rise to weight $\frac{3}{2}$ unary theta functions. Similar to the argument for false theta functions above, the reason for non-modularity comes from the fact that the upper bound in this integral does not transform under modular transformations. Thus the resolution is similarly to insert an arbitrary $w \in \mathbb{H}$ as the upper bound and transform it like $-\tau$ under the modular group. That $w$ should transform like $-\tau$, and not like $\tau$ is what distinguishes the completion for indefinite theta functions from those for the false theta functions. Indeed it is this fact that allows one to insert $w = - \bar{\tau}$, giving the more familiar notion of mock modular forms. Another significant difference between the two cases is that because $-\tau$ lives in the lower half-plane, the integral that yields the sign functions can not be commuted with the series defining the lattice sum. That this is possible for the false case allows us to write both the false theta function and its completion in terms of Eichler-type integrals. This is in a sense why false theta functions are simpler than their indefinite counterparts.
\end{rem}
We are now ready to prove Theorem \ref{QuantumTheorem}.
\begin{proof}[Proof of Theorem \ref{QuantumTheorem}]
We start by recalling the following transformation properties of $f_{j,N}$.\footnote{ The function $\tau\mapsto f_{j,N} (\tau)$ as defined in \cite{BM} corresponds to $f_{j,N} ( \frac{\tau}{N} )$ here. Modular transformations for both $F_{j,N}$ and $f_{j,N}$ act more naturally in our notation.} It is a classic result that this is a vector-valued modular form \cite{Sh} satisfying
\begin{align*}
f_{j,N}(\tau+1) &= e^{\frac{2 \pi i j^2}{4N}}   f_{j,N}(\tau)  \andd
f_{j,N} \lp -\tfrac{1}{\tau} \rp 
=  
\sqrt{\frac{2}{N}}  (-i \tau)^{\frac{3}{2}} \sum_{r=1}^{N-1} \sin \lp \tfrac{\pi jr}{N} \rp  
f_{r,N}(\tau) .
\end{align*}
For reference we give the full multiplier system for $M = \pmat{a & b \\ c& d} \in \mathrm{SL}_2 (\IZ)$ (see, for example, \cite{CohenStromberg} for further details), where $j,r \in \{ 1, \ldots, N-1 \}$,
\begin{equation}\label{definemul}
\psi_{j,r} (M) := \begin{cases}
e^{2\pi i a b \frac{j^2}{4N}}   e^{-\frac{\pi i}{4} (1 - \sgn (d))} \d_{j,r} \qquad & \mbox{if } c=0, \\
e^{-\frac{3\pi i}{4} \sgn (c)} \sqrt{\frac{2}{N |c|}}  
\displaystyle\sum_{k=0}^{|c| - 1}  e^{\frac{\pi i}{2 N c} \lp a (2Nk + j)^2 + d r^2 \rp}   
\sin \lp \tfrac{\pi r ( 2 Nk + j)}{N |c|} \rp
\qquad & \mbox{if } c\neq 0.
\end{cases}
\end{equation}
Here $\delta_{j,r}:=1$ if $j=r$ and $0$ otherwise. Therefore we have, for $M \coloneqq \pmat{a & b \\ c & d} \in \mathrm{SL}_2(\IZ)$,
\begin{equation*}
f_{j,N} \lp \tfrac{a \tau + b}{c \tau + d}  \rp = (c \tau +d)^{\frac{3}{2}}  
\sum_{r=1}^{N-1} \psi_{j,r} (M)    f_{r,N}(\tau).
\end{equation*}
Moreover Lemma \ref{cor:QuantumThm} implies that
\begin{equation}\label{Fhat_general_transform}
\wh{F}_{j,N} \lp \tfrac{a \tau + b}{c \tau + d} ,\tfrac{a w + b}{c w + d} \rp = 
\chi_{\t,w} (M)   (c \tau +d)^{\frac{1}{2}}  
\sum_{r=1}^{N-1} \psi_{j,r} (M)    \wh{F}_{r,N}(\tau,w).
\end{equation}

Now, the $c=0$ case of Theorem \ref{QuantumTheorem} directly follows by inspection using \eqref{definemul}. So we assume from now on $c\neq 0$.
Using \eqref{eq:Fhat_hol_comp} with a $\pm$ sign to be chosen below, we write the modular transformation identity  \eqref{Fhat_general_transform} for $M = \pmat{a & b \\ c & d} \in \mathrm{SL}_2 (\IZ)$ (and $\varepsilon_1, \varepsilon_2 > 0$) as,
\begin{align*}
&\pm F_{j,N} \lp \tfrac{a \tau + b}{c \tau + d}  \rp 
- 
\sqrt{2N} \int_{\frac{aw+b}{cw+d}}^{\frac{a\t+b}{c\t+d}+i\infty\pm\varepsilon_1} \frac{f_{j,N}(\mathfrak z)}{\sqrt{i \left(\zz- \tfrac{a\t+b}{c\t+d} \right)  }}\ddd\zz   \\
&\qquad \qquad
= 
\chi_{\t,w}(M)   (c \tau +d)^{\frac{1}{2}}  
\sum_{r=1}^{N-1} \psi_{j,r} (M) 
\lp  F_{r,N}(\tau)
-
\sqrt{2N} \int_{w}^{\tau+i\infty+\varepsilon_2} \frac{f_{r,N}(\mathfrak z)}{\sqrt{i(\mathfrak z- \tau)}}\ddd\zz \rp.
\end{align*}
We then take $w \to \t + i \infty + \e_2$. Thus we get, using that $\chi_{\t,w} (M) \to \sgn (c)$,
\begin{equation*}
\pm F_{j,N} \lp \tfrac{a \tau + b}{c \tau + d}  \rp  
-  \sgn (c)
(c \tau +d)^{\frac{1}{2}} \sum_{r=1}^{N-1} \psi_{j,r}(M)   F_{r,N}(\tau)
= 
\sqrt{2N} \int_{\frac{a}{c}}^{\frac{a\t+b}{c\t+d}+i\infty\pm\varepsilon_1} \frac{f_{j,N}(\mathfrak z)}{\sqrt{i \left(\zz- \tfrac{a\t+b}{c\t+d} \right)  }}\ddd\zz   .
\end{equation*}
Now note that $\frac{a}{c} - \frac{a\t+b}{c \t+d} = \frac{1}{c^2} \frac{1}{\t + \frac{d}{c}}$.
So supposing that $\t_1 \neq - \frac{d}{c}$, if we let $\varepsilon_1 \coloneqq \frac{1}{c^2} \frac{|\t_1 + \frac{d}{c} |}{| \t + \frac{d}{c} |^2}$ and $\pm = \sgn (\t_1 + \frac{d}{c})$, then we can choose the integration path to be a vertical path from $\frac{a}{c}$ to $i \infty$. Making that choice we get
\begin{equation*}
F_{j,N} \lp \tfrac{a \tau + b}{c \tau + d}  \rp  
-  \sgn (c \t_1 + d)
(c \tau +d)^{\frac{1}{2}} \sum_{r=1}^{N-1} \psi_{j,r}(M)   F_{r,N}(\tau)
=   \sgn \lp \t_1 + \tfrac{d}{c} \rp
\sqrt{2N} \int\displaylimits_{\frac{a}{c}}^{\frac{a}{c}+i\infty} \frac{f_{j,N}(\mathfrak z)}{\sqrt{i \left(\zz- \frac{a\t+b}{c\t+d} \right)  }}\ddd\zz   .
\end{equation*}
Since on this vertical path we have $\sqrt{i (\zz- \frac{a\t+b}{c\t+d} ) } =\sgn(\tau_1+\frac{d}{c})   i   \sqrt{-i (\zz- \frac{a\t+b}{c\t+d}) }$ we get
\begin{equation*}
F_{j,N} \lp \tfrac{a \tau + b}{c \tau + d}  \rp  
-  \sgn (c \t_1 + d)   
(c \tau +d)^{\frac{1}{2}} \sum_{r=1}^{N-1} \psi_{j,r}(M)     F_{r,N}(\tau)
=   -i
\sqrt{2N} \int_{\frac{a}{c}}^{i\infty} \frac{f_{j,N}(\mathfrak z)}{\sqrt{-i \left(\zz- \frac{a\t+b}{c\t+d} \right)  }}\ddd\zz   ,
\end{equation*}
for any integration path that avoids the branch-cut.\footnote{Note that the integrand is exponentially decaying as $\mathrm{Im} (\zz) \to \infty$, so after rotating the branch-cut we can freely shift the upper-bound in the integral.} Sending $M \mapsto M^{-1}$ we obtain
\begin{align*}
F_{j,N} \lp \tfrac{d \tau - b}{-c \tau + a}  \rp  
-  \sgn (-c \t_1 + a)  
(-c \tau +a)^{\frac{1}{2}} &\sum_{r=1}^{N-1} \psi_{j,r}\lp M^{-1}\rp  F_{r,N}(\tau)
\notag \\
& \quad
  =   -i
\sqrt{2N} \int_{-\frac{d}{c}}^{i\infty} \frac{f_{j,N}(\mathfrak z)}{\sqrt{-i \left(\zz- \frac{d\t-b}{-c\t+a} \right)  }}\ddd\zz   .
\label{F_transformation}
\end{align*}
Replacing $\t \mapsto M\tau$ yields the claim.
\end{proof}

\begin{proof}[Proof of Corollary \ref{quantum}]
	The claim follows from taking vertical limits of $\tau$ to rationals in the statement of Theorem \ref{QuantumTheorem} and noting that the integral giving the obstruction to modularity extends to $\mathbb{R}$ as a function of $\tau$ and is real-analytic in $\mathbb{R} \setminus \{-\tfrac{d}{c} \}$.
\end{proof}

\end{document}